\theoremstyle{plain}
\newtheorem{theorem}{\protect\theoremname}
\newtheorem{proposition}[theorem]{\protect\propositionname}
\newtheorem{lemma}[theorem]{\protect\lemmaname}
\theoremstyle{remark}
\theoremstyle{definition}
\newtheorem{definition}[theorem]{\protect\definitionname}
\numberwithin{theorem}{section}
\providecommand{\corollaryname}{Corollary}
\providecommand{\definitionname}{Definition}
\providecommand{\examplename}{Example}
\providecommand{\lemmaname}{Lemma}
\providecommand{\notationname}{Notation}
\providecommand{\remarkname}{Remark}
\providecommand{\remarkname}{Definition}
\providecommand{\propositionname}{Proposition}
\providecommand{\theoremname}{Theorem}
\providecommand{\equationname}{Equation}
\providecommand{\figurename}{Figure}
\providecommand{\casename}{Case}
\providecommand{\sectionname}{Section}
\newcommand{\R}{\mathbb R}
\newcommand{\trappedregion}{\mathcal T}
\renewcommand{\div}{\operatorname{div}}
\newcommand{\grad}{\operatorname{grad}}
\newcommand{\Hm}[1]{\mathcal H^{#1}}
\newcommand{\reg}{\operatorname{reg}}
\newcommand{\sing}{\operatorname{sing}}
\newcommand{\rpartial}{\partial^*}
\newcounter{casenumber}
\newcommand{\casedef}[2]{\protect\refstepcounter{casenumber}\textbf{Case~\Roman{casenumber}:} #2\label{#1}}
\newenvironment{proofcase}[2]{
\vspace{.2pc}
\noindent \textbf{Case~#1: #2}
\begin{adjustwidth}{2pc}{}
}
{
\end{adjustwidth}
\vspace{.3pc}
}
\title[Regularity of the boundary of the trapped region]{Regularity of the boundary of the trapped region in asymptotically Euclidean Riemannian manifolds of arbitrarily large dimensions}
\author{Eric Larsson}
\address{Department of mathematics \\
  KTH \\
  SE-100 44 Stockholm \\
  Sweden} 
\email[Eric Larsson]{ericlar@kth.se}
\begin{document}
\maketitle

\begin{abstract}
We prove that the boundary of the trapped region in an asymptotically Euclidean Riemannian manifold of dimension at least \(3\) is a stable smooth minimal hypersurface except for a singular set of codimension at least 8.
\end{abstract}

\setcounter{tocdepth}{1}
\tableofcontents

\section{Introduction}
Asymptotically Euclidean initial data slices for Einstein's equation are used to model isolated black holes.
One way of thinking about black holes in an initial data setting is to consider the \enquote{trapped region}.
The conventional way of defining the trapped region in \(3\)-dimensional initial data slices is by using the concept of \enquote{outer expansion} to define \enquote{weakly outer trapped surfaces}.
For details, see \cite{AnderssonMetzger09} \cite{Eichmair10} \cite{AnderssonEichmairMetzger11}.
The outer expansion of a smooth hypersurface is an analog of the mean curvature in the purely Riemannian setting, and it is defined by modifying the mean curvature by a term from the second fundamental form of the initial data slice.
A compact hypersurface is weakly outer trapped if its outer expansion is everywhere nonpositive.
The trapped region is then the union of all sets which are enclosed by weakly outer trapped surfaces.
From this definition, there is no immediate reason that the boundary of the trapped region, which can be interpreted as an initial-data version of the boundary of the black hole, should have any particular regularity.
However, it was proved in work by Andersson, Eichmair, and Metzger \cite{AnderssonMetzger09} \cite{Eichmair10} \cite{AnderssonEichmairMetzger11} under the condition that the initial data slice has dimension \(3 \leq n \leq 7\), that the boundary of the trapped region is a smooth hypersurface, that it is \enquote{marginally outer trapped} in the sense that its outer expansion is everywhere zero, and that it is stable.
A natural question is what can be said about the regularity of the boundary of the trapped region in higher dimensions.
Of course, this depends on how the trapped region in a high-dimensional manifold is defined.
We are not aware of any standard definition, but we discuss our choice briefly in \prettyref{sec:prehorizon-domains-and-the-trapped region}.
The expected answer to the high-dimensional regularity question is that the boundary of the trapped region is marginally trapped, stable, and smooth outside of a singular set of codimension at least \(8\).
This expectation is supported by an analogy with the theory of minimal hypersurfaces.
In the case when the second fundamental form of the initial data slice is identically zero, the initial data slice is simply an asymptotically Euclidean Riemannian manifold, and the outer expansion is simply the mean curvature.
In this case, the results in dimensions less than \(8\) say that the boundary of the trapped region is the outermost minimal hypersurface, and that it is stable.
It is known from work by Schoen and Simon \cite{SchoenSimon81} that stable minimal hypersurfaces are smooth outside of a singular set of codimension at least \(8\).
One might then hope that this regularity holds for the boundary of the trapped region in all dimensions.

The result of this paper is that this expected regularity indeed holds in all dimensions in the case when the second fundamental form of the initial data slice is identically zero.

\subsection*{Acknowledgements}
I would like to thank Mattias Dahl and Hans Ringström for helpful suggestions concerning this paper.

\section{Sets of locally finite perimeter}
There are several ways of working with nonsmooth analogues of minimal hypersurfaces.
We have chosen to primarily use sets of locally finite perimeter, for which \cite{Giusti84} is a good reference.
The minimal hypersurfaces we are interested in are boundaries, and the approach using sets of locally finite perimeter reflects the fact that the sets, and not only their boundaries, are of interest.

\begin{definition}
A subset \(E\) of a Riemannian manifold \((M, g)\) with boundary is a \emph{set of locally finite perimeter} if for every open set \(\Omega \subseteq M\) with compact closure it holds that \(P(E, \Omega) < \infty\), where
\[
P(E, \Omega)
=
\sup_{\substack{X \in \mathfrak{X}_0^{C^1}(\Omega) \\ ||X||_g \leq 1 \text{ everywhere}}} \int_E \div X \, d\Hm{n}.
\]
Here \(\mathfrak{X}_0^{C^1}(\Omega)\) denotes the set of \(C^1\) vector fields which are compactly supported in \(\Omega\), and \(\Hm{n}\) denotes \(n\)-dimensional Hausdorff measure on \((M, g)\).
The quantity \(P(E, \Omega)\) is called the \emph{perimeter} of \(E\) in \(\Omega\).
\end{definition}

Following \cite[Definition~3.3]{Giusti84}, we use \(\rpartial E\) to denote the reduced boundary of a set \(E\) of locally finite perimeter.
Note that a set has locally finite perimeter if and only if its image under any coordinate chart has locally finite perimeter in the Euclidean metric on the coordinate chart.
This means that the compactness theorem for sets of locally finite perimeter in Euclidean space \cite[Theorem~1.19]{Giusti84} carries over to Riemannian manifolds.
\begin{lemma}\label{lem:Caccioppoli-compactness}
Let \((E_i)_{i = 1}^\infty\) be a sequence of sets of locally finite perimeter in a Riemannian manifold, and suppose that their perimeters are uniformly bounded.
Then there is a subsequence which converges (in \(L^1_{\mathrm{loc}}\)-norm for the indicator functions) to a set of locally finite perimeter.
\end{lemma}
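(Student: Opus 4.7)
The plan is to reduce the Riemannian statement to the Euclidean compactness theorem cited from Giusti by working chart by chart and extracting a diagonal subsequence. First I would fix an exhaustion of $M$ by relatively compact open sets $\Omega_1 \Subset \Omega_2 \Subset \cdots$ with $\bigcup_k \Omega_k = M$, chosen so that each $\overline{\Omega_k}$ is covered by finitely many coordinate charts on which the Riemannian metric $g$ is uniformly equivalent to the pulled-back Euclidean metric. The uniform equivalence of metrics on each chart means that $P(E, \Omega)$ in the Riemannian sense differs from the Euclidean perimeter of the image of $E$ only by a multiplicative constant, and similarly the Riemannian and Euclidean volume forms are comparable, so convergence in Riemannian $L^1_{\mathrm{loc}}$ and Euclidean $L^1_{\mathrm{loc}}$ agree on each chart.

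Next I would apply the Euclidean compactness theorem of \cite[Theorem~1.19]{Giusti84} on each chart: the hypothesis that the Riemannian perimeters $P(E_i, \Omega_k)$ are bounded uniformly in $i$ transfers, via the chart comparison, to a uniform Euclidean perimeter bound on the images of the $E_i$ in each coordinate ball. Giusti's theorem then yields, on the first chart covering $\overline{\Omega_1}$, a subsequence $(E_i^{(1)})$ whose indicator functions converge in $L^1$ to the indicator function of a set of finite Euclidean perimeter. Passing to a further subsequence for each subsequent chart and then diagonalizing across the countable cover produces a single subsequence, still denoted $(E_i)$, which converges in $L^1_{\mathrm{loc}}(M)$ to the indicator of some measurable set $E \subseteq M$.

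Finally I would verify that $E$ has locally finite perimeter in the Riemannian sense. This follows from lower semicontinuity of perimeter under $L^1_{\mathrm{loc}}$ convergence: for any $C^1$ vector field $X$ compactly supported in a relatively compact open set $\Omega$ with $\|X\|_g \leq 1$,
\[
\int_E \div X \, d\Hm{n}
=
\lim_{i \to \infty} \int_{E_i} \div X \, d\Hm{n}
\leq
\liminf_{i \to \infty} P(E_i, \Omega),
\]
which is finite by the uniform perimeter bound. Taking the supremum over admissible $X$ gives $P(E, \Omega) < \infty$, so $E$ is a set of locally finite perimeter.

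The substantive content is entirely contained in the Euclidean theorem of Giusti; the only real work here is the bookkeeping to glue the chartwise statements together. The main (and essentially only) obstacle is to confirm that the equivalence between Riemannian and Euclidean perimeters on each chart is uniform on compact subsets, so that the uniform Riemannian bound transfers to a uniform Euclidean bound, and that the diagonal subsequence is chosen so that convergence holds on every member of the exhaustion.
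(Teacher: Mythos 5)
Your proposal is correct and follows essentially the same route the paper intends: the paper does not write out a proof but justifies the lemma by the remark that locally finite perimeter is a chart-local notion, so that \cite[Theorem~1.19]{Giusti84} carries over, which is exactly the chart-by-chart reduction, diagonalization, and semicontinuity argument you spell out. The only point you rightly flag --- uniform comparability of the Riemannian and Euclidean perimeters on compact subsets of each chart --- is the same implicit step the paper relies on, and your treatment of it is sound.
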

Similarly, the theorem about semicontinuity of perimeter \cite[Theorem~1.9]{Giusti84} also carries over.
\begin{lemma}\label{lem:perimeter-semicontinuity}
Let \((E_i)_{i = 1}^\infty\) be a sequence of sets of locally finite perimeter in a Riemannian manifold which converge in \(L^1\)-norm for the indicator functions to a set \(E\) of locally finite perimeter.
Then it holds for every open set \(\Omega\) that
\[
P(E, \Omega)
\leq
\liminf_{i \to \infty} P(E_i, \Omega).
\]
\end{lemma}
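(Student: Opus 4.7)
The plan is to prove the result directly from the definition of perimeter, mirroring the Euclidean argument in \cite[Theorem~1.9]{Giusti84} but with the Riemannian divergence. Because \(P(E, \Omega)\) is defined as a supremum over admissible test fields \(X \in \mathfrak{X}_0^{C^1}(\Omega)\) with \(\|X\|_g \leq 1\), it is enough to prove the inequality for each individual admissible \(X\) and then take the supremum.

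First I would fix such a test field \(X\). Since \(X\) is \(C^1\) and compactly supported in \(\Omega\), the Riemannian divergence \(\div X\) is continuous and compactly supported, hence bounded. Therefore the linear functional
\[
F \longmapsto \int_F \div X \, d\Hm{n} = \int_M \mathbf{1}_F \, \div X \, d\Hm{n}
\]
is continuous with respect to \(L^1_{\mathrm{loc}}\) convergence of indicator functions on the support of \(X\). The hypothesis \(\mathbf{1}_{E_i} \to \mathbf{1}_E\) then gives
\[
\int_{E_i} \div X \, d\Hm{n} \longrightarrow \int_E \div X \, d\Hm{n}.
\]

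Next, \(X\) is admissible in the definition of \(P(E_i, \Omega)\) for every \(i\), so \(\int_{E_i} \div X \, d\Hm{n} \leq P(E_i, \Omega)\). Passing to the liminf yields
\[
\int_E \div X \, d\Hm{n} \leq \liminf_{i \to \infty} P(E_i, \Omega),
\]
and since the right-hand side does not depend on \(X\), taking the supremum over admissible \(X\) on the left gives \(P(E, \Omega) \leq \liminf_{i \to \infty} P(E_i, \Omega)\), as desired.

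I do not expect a substantive obstacle. The only points requiring care are verifying that the Riemannian \(\div X\) is bounded and compactly supported (immediate from \(X \in \mathfrak{X}_0^{C^1}(\Omega)\)), and checking that \(L^1\) convergence of the indicator functions is enough to pass to the limit inside the integral against \(\div X\); both are routine. The Riemannian setting enters only through the metric pairing used to impose \(\|X\|_g \leq 1\) and through the volume form implicit in \(\Hm{n}\), and both are built into the definition of \(P\), so no chart-based reduction to the Euclidean statement is actually needed.
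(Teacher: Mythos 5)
Your proof is correct. The paper itself gives no argument for this lemma: it simply asserts that the Euclidean semicontinuity theorem \cite[Theorem~1.9]{Giusti84} \enquote{carries over}, implicitly via the preceding observation that finiteness of perimeter can be checked in coordinate charts. Your route is different in that it is direct and intrinsic: you work straight from the paper's definition of \(P(E,\Omega)\) as a supremum of the linear functionals \(X \mapsto \int_E \div X \, d\Hm{n}\), each of which is continuous under \(L^1\) (indeed \(L^1_{\mathrm{loc}}\)) convergence of indicator functions because \(\div X\) is bounded with compact support, and then you use that a supremum of lower semicontinuous functionals is lower semicontinuous. This is exactly Giusti's own proof of Theorem~1.9 transplanted to the manifold, and it is arguably the cleaner justification here: a literal chart-by-chart reduction of the \emph{inequality} (as opposed to the finiteness statement) would require a partition-of-unity argument to reassemble \(P(\cdot,\Omega)\) from the charts, whereas your argument needs nothing beyond the intrinsic definition. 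The only point worth making explicit is that when \(\Omega\) is not relatively compact one should interpret the hypothesis as \(L^1_{\mathrm{loc}}\) convergence, which suffices since each test field has compact support in \(\Omega\); you already note this.
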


\section{Stationarity over singular sets}
In the proof of \prettyref{prop:prehorizon-set-union}, we will need to prove that a certain set is stationary.
However, the methods used in that proof only prove stationarity with respect to variations which are compactly supported outside of a small set.
The purpose of this section is to prove \prettyref{lem:extension-of-stationarity}, which will allow us to conclude that the set is stationary with respect to all variations.

\begin{definition}
Let \(M\) be a smooth manifold and let \(S\) be a subset of \(M\).
We define \(\reg S\) to be the set of points \(x \in \overline S\) such that there is an open neighborhood \(U \subset M\) of \(x\) such that \(U \cap \overline S\) is a connected \(C^2\) hypersurface without boundary.
We define \(\sing S = \overline S \setminus \reg S\).
Note that \(\sing S\) is a closed set.
\end{definition}

\begin{lemma}\label{lem:extension-of-stationarity}
Let \((M, g)\) be a Riemannian manifold of dimension \(n \geq 3\).
Let \(S \subseteq M\) be a set which satisfies \(\Hm{n-1}(\sing S) = 0\).
Suppose that there is a constant \(\omega\) such that \(\Hm{n-1}(S \cap B(x, r)) < \omega r^{n-1}\) for all \(x \in M\) and \(r > 0\), where \(B(x, r)\) is the \(n\)-dimensional open ball of radius \(r\) around \(x\) in \(M\).
If \(S\) is stationary with respect to all variations which are compactly supported outside of a compact set \(A\) with \(\Hm{n-2}(A) = 0\), then \(S\) is stationary with respect to all compactly supported variations.
\end{lemma}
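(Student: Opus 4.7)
The plan is a standard cutoff argument: given an arbitrary compactly supported vector field \(X\), I will construct a family of cutoff functions \(\varphi_\epsilon\) vanishing near \(A\) so that \(\varphi_\epsilon X\) is compactly supported in \(M \setminus A\), apply the hypothesis to conclude \(\delta S(\varphi_\epsilon X) = 0\), and then show that \(\delta S(\varphi_\epsilon X) \to \delta S(X)\) as \(\epsilon \to 0\). The point is that the codimension condition \(\Hm{n-2}(A) = 0\) together with the \((n-1)\)-dimensional upper density bound on \(S\) is exactly enough to make the error terms in the variation formula vanish.

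More concretely, for each \(\epsilon > 0\) I would use the definition of Hausdorff measure and the compactness of \(A\) to produce a finite cover of \(A\) by balls \(B(x_i, r_i)\) with \(\sum_i r_i^{n-2} < \epsilon\), with each \(r_i\) arbitrarily small. Working in local coordinates on each ball (or using the distance function, reducing \(r_i\) below the injectivity radius on a neighborhood of \(A\)), I would build \(\varphi_\epsilon \in C^1(M)\) satisfying \(\varphi_\epsilon = 0\) on \(\bigcup_i B(x_i, r_i)\), \(\varphi_\epsilon = 1\) outside \(\bigcup_i B(x_i, 2 r_i)\), and \(|\nabla \varphi_\epsilon|_g \leq C/r_i\) on \(B(x_i, 2 r_i) \setminus B(x_i, r_i)\), for some constant \(C\) depending only on the metric on a fixed neighborhood of \(A\); taking pointwise minima of individual cutoffs and smoothing suffices.

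By construction \(\operatorname{supp}(\varphi_\epsilon X) \cap A = \emptyset\) and \(\varphi_\epsilon X\) is compactly supported, so the hypothesis gives \(\delta S(\varphi_\epsilon X) = 0\). Since the first variation on \(\reg S\) satisfies
\[
\div_{\reg S}(\varphi_\epsilon X) - \div_{\reg S} X = -(1 - \varphi_\epsilon) \div_{\reg S} X + X \cdot \nabla_{\reg S} \varphi_\epsilon,
\]
we obtain
\[
\bigl|\delta S(X)\bigr| \leq \int_{\reg S \cap \bigcup_i B(x_i, 2 r_i)} \bigl( |{\div_{\reg S} X}| + |X| \, |\nabla \varphi_\epsilon|_g \bigr) \, d \Hm{n-1}.
\]
The density bound \(\Hm{n-1}(S \cap B(x_i, 2 r_i)) \leq \omega (2 r_i)^{n-1}\) then estimates the first term by \(C_1 \|\div_{\reg S} X\|_\infty \sum_i r_i^{n-1}\) and the second by \(C_2 \|X\|_\infty \sum_i r_i^{n-2}\). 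Once the cover is refined so that each \(r_i \leq 1\), both sums are bounded by \(\sum_i r_i^{n-2} < \epsilon\), so letting \(\epsilon \to 0\) gives \(\delta S(X) = 0\).

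The only real obstacle is the construction of the cutoffs with the stated gradient bound on the manifold, which is routine once \(A\) is compact and the radii are small. The role of the dimension hypothesis \(n \geq 3\) is to guarantee \(n - 2 \geq 1\), so that \(\Hm{n-2}\) is a genuine Hausdorff measure and the small-\(r_i\) refinement of the cover actually controls the gradient term.
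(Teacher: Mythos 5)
Your proposal is correct and follows essentially the same route as the paper: cover the compact set \(A\) by finitely many balls with \(\sum_i r_i^{n-2}\) small, build cutoffs with gradient of order \(1/r_i\), and use the upper density bound \(\Hm{n-1}(S \cap B(x,r)) < \omega r^{n-1}\) to kill the gradient term in the first variation. The only cosmetic difference is that you bound the term \(\int (1-\varphi_\epsilon)\,\div_{\reg S} X\) directly via the density bound (after refining so \(r_i \leq 1\)), whereas the paper disposes of it by dominated convergence; both are fine.
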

\begin{proof}
Since \(\Hm{n-1}(\sing S) = 0\), it holds that \(S\) is stationary if and only if \(\reg S\) is stationary.
We may without loss of generality assume that \(S = \reg S\), and we will do so for notational convenience.
We need to prove that
\[
\int_{S} \div_S(X) \, d\Hm{n-1}
=
0
\]
for all compactly supported \(C^1\) vector fields \(X\) on \(M\), where \(\div_{S}(X)\) is the divergence of \(X\) along the hypersurface \(S\).
Fix such a vector field \(X\).

Since \(S\) is stationary with respect to variations which are compactly supported in \(M \setminus A\), it holds that
\[
\int_{S} \div_{S}(\eta X) \, d\Hm{n-1}
=
0\]
if \(\eta\) is a smooth function which is zero on a neighborhood of \(A\).
Let \(\pi\) denote the orthogonal projection of \(TM\) onto \(TS\).
Since
\[
\div_{S}(\eta X)
=
\eta \div_{S}(X) + \pi(X)(\eta)
\]
we have
\[\begin{aligned}
&\left| \int_{S} \eta \div_{S}(X) \, d\Hm{n-1} \right|
\\
&=
\left| \int_{S} \div_{S}(\eta X) \, d\Hm{n-1} - \int_{S} \pi(X)(\eta) \, d\Hm{n-1} \right|
\\
&=
\left| \int_{S} \pi(X)(\eta) \, d\Hm{n-1} \right|
\\
&\leq
\left( \sup_M |X| \right) \int_{S} |d\eta| \, d\Hm{n-1}.
\end{aligned}\]
We will now construct a family of functions \((\eta_\epsilon)_{\epsilon > 0}\) such that
\[
\lim_{\epsilon \to 0} \int_{S} \eta_\epsilon \div_{S}(X) \, d\Hm{n-1}
=
\int_{S} \div_{S}(X) \, d\Hm{n-1}
\]
and
\[
\int_{S} |d\eta_\epsilon| \, d\Hm{n-1}
<
\epsilon,
\]
thereby proving that
\[
\int_{S} \div_{S}(X) \, d\Hm{n-1}
=
0.
\]

Since \(\Hm{n-2}(A) = 0\), there is for every \(\epsilon > 0\) a cover of \(A\) by open balls
\[
A
\subseteq
\bigcup_{i = 1}^\infty B(x_{\epsilon, i}, r_{\epsilon, i})
\]
where
\[
\sum_{i = 1}^\infty r_{\epsilon, i}^{n-2}
<
\frac{\epsilon}{2^n \omega},
\]
and since \(A\) is compact, there is a finite subcover
\[
A
\subseteq
\bigcup_{i = 1}^{N_\epsilon} B(x_{\epsilon, i}, r_{\epsilon, i})
\]
where
\[
\sum_{i = 1}^{N_\epsilon} r_{\epsilon, i}^{n-2}
<
\frac{\epsilon}{2^n \omega}.
\]
For each \(i\), let \(\eta_{\epsilon, i}\) be a smooth function such that
\[
\eta_{\epsilon, i}(x)
=
\begin{cases}
	1 \text{ if } x \in B(x_{\epsilon, i}, r_{\epsilon, i}),\\
	0 \text{ if } x \notin B(x_{\epsilon, i}, 2r_{\epsilon, i})
\end{cases}
\]
and \(||d\eta_{\epsilon, i}||_g < 2/r_{\epsilon, i}\) everywhere.
Let
\[
\eta_\epsilon(x)
=
1 - \prod_{i = 1}^{N_\epsilon} \eta_{\epsilon, i}(x).
\]
By the dominated convergence theorem,
\[
\lim_{\epsilon \to 0} \int_{S} \eta_\epsilon \div_{S}(X) \, d\Hm{n-1}
=
\int_{S} \div_{S}(X) \, d\Hm{n-1}.
\]
It holds that
{\allowdisplaybreaks
\begin{align*}
\int_{S} |d\eta_\epsilon| \, d\Hm{n-1}
&=
\int_{S} \left| \sum_{i = 1}^{N_\epsilon} \left(\prod_{j \neq i} \eta_{\epsilon, j}\right) d\eta_{\epsilon, i} \right| \, d\Hm{n-1}
\\
&\leq
\sum_{i = 1}^{N_\epsilon} \int_{S} \left|\prod_{j \neq i} \eta_{\epsilon, j}\right| \left|d\eta_{\epsilon, i} \right| \, d\Hm{n-1}
\\
&\leq
\sum_{i = 1}^{N_\epsilon} \int_{S} \left|d\eta_{\epsilon, i} \right| \, d\Hm{n-1}
\\
&=
\sum_{i = 1}^{N_\epsilon} \int_{S \cap B(x_{\epsilon, i}, 2r_{\epsilon, i})} \left|d\eta_{\epsilon, i} \right| \, d\Hm{n-1}
\\
&\leq
\sum_{i = 1}^{N_\epsilon} \int_{S \cap B(x_{\epsilon, i}, 2r_{\epsilon, i})} \frac{2}{r_{\epsilon, i}} \, d\Hm{n-1}
\\
&=
\sum_{i = 1}^{N_\epsilon}  \frac{2}{r_{\epsilon, i}} \Hm{n-1}(S \cap B(x_{\epsilon, i}, 2r_{\epsilon, i}))
\\
&\leq
2^n \omega \sum_{i = 1}^{N_\epsilon} r_{\epsilon, i}^{n-2}
\\
&\leq
\epsilon.
\end{align*}
}
Hence \(S\) is stationary with respect to all compactly supported variations.
\end{proof}

\section{Prehorizon domains and the trapped region}\label{sec:prehorizon-domains-and-the-trapped region}
This section defines the trapped region of an asymptotically Euclidean manifold in a way which is useful for working with manifolds of dimension greater than \(7\).
In low dimensions, it is natural to define the trapped region as the union of all sets which are bounded by smooth hypersurfaces with nonpositive mean curvature.
Since the solution of an area minimization problem in the region outside of a hypersurface with nonpositive mean curvature gives a smooth minimal hypersurface, this definition is equivalent to defining the trapped region as the union of all sets bounded by smooth minimal hypersurfaces.
In higher dimensions, the solutions of area minimization problems are not guaranteed to be smooth.
If we define the trapped region using smooth minimal hypersurfaces, then area minimization problems are not immediately applicable as a tool for determining its properties.
Instead, we propose to define the trapped region as the union of \enquote{prehorizon domains}, as defined below.
If the manifold has dimension at most \(7\), then this definition agrees with the definitions in terms of smooth hypersurfaces and smooth hypersurfaces with nonpositive mean curvature.
In higher dimensions, this trapped region includes the union of all sets bounded by smooth hypersurfaces with nonpositive mean curvature, but we do not know whether they are necessarily equal.

\begin{definition}
Let \((M, g)\) be an asymptotically Euclidean Riemannian manifold.
A set \(E \subseteq M\) of locally finite perimeter is a \emph{bounded domain} (with respect to the chosen asymptotically Euclidean end) if
\begin{itemize}
\item \(E\) is open,
\item the complement of \(E\) is a neighborhood of the chosen asymptotically Euclidean end,
\item after compactifying the chosen asymptotically Euclidean end, the complement of \(E\) is compact,
\item \(\partial E = \overline{\rpartial E}\).
\end{itemize}
\end{definition}

\begin{definition}
Let \((M, g)\) be an asymptotically Euclidean Riemannian manifold.
A bounded domain \(E \subseteq M\) is \emph{outer area minimizing} (with respect to the chosen asymptotically Euclidean end) if \(P(E, M) < \infty\) and there is no bounded domain \(E' \supset E\) such that \(P(E', M) < P(E, M)\).
\end{definition}

\begin{definition}
Let \((M, g)\) be an \(n\)-dimensional asymptotically Euclidean Riemannian manifold.
A bounded domain \(E \subseteq M\) is a \emph{prehorizon domain} (with respect to the chosen asymptotically Euclidean end) if it is outer area minimizing, \(\Hm{n-3}(\partial E \setminus \rpartial E) = 0\), and \(\rpartial E\) is a smooth minimal hypersurface.
\end{definition}

\begin{definition}
Let \((M, g)\) be an asymptotically Euclidean Riemannian manifold.
The \emph{trapped region} of \((M, g)\) is the union of all prehorizon domains.
\end{definition}

The following lemma is an immediate consequence of the Schoen--Simon regularity theory \cite{SchoenSimon81} for stable stationary hypersurfaces, which we discuss briefly in \prettyref{sec:schoen-simon}.
\begin{lemma}\label{lem:prehorizon-regularity}
Let \((M, g)\) be an asymptotically Riemannian manifold of dimension \(n\).
If \(E\) is a prehorizon domain, then \(\Hm{\alpha}(\partial E \setminus \rpartial E) = 0\) if \(\alpha > n - 8\) and \(\alpha \geq 0\).
\end{lemma}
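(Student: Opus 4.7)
The plan is to reduce to the Schoen--Simon regularity theorem for stable stationary minimal hypersurfaces of codimension one, which produces a singular set of Hausdorff dimension at most $n-8$. Since $\rpartial E$ is already a smooth minimal hypersurface by hypothesis, the real work is to upgrade stationarity and stability of $\rpartial E$ from its smooth part to the full ambient manifold $M$, so that $\rpartial E$ qualifies as a genuine input to Schoen--Simon.

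For stationarity, $\rpartial E$ is automatically stationary with respect to every variation compactly supported in $M \setminus (\partial E \setminus \rpartial E)$, since it has vanishing mean curvature there. I then plan to apply \prettyref{lem:extension-of-stationarity} with $S = \rpartial E$ and $A = \partial E \setminus \rpartial E$. The set $A$ is compact, because $\partial E$ is compact after compactifying the asymptotically Euclidean end. The hypothesis $\Hm{n-3}(A) = 0$ forces $\Hm{n-2}(A) = 0$, since vanishing of Hausdorff measure in dimension $s$ implies vanishing in every dimension $t > s$. The uniform density bound $\Hm{n-1}(\rpartial E \cap B(x,r)) \leq \omega r^{n-1}$ follows from outer area minimality: the comparison $P(E, M) \leq P(E \cup B(x,r), M)$ gives $\Hm{n-1}(\rpartial E \cap B(x,r)) \leq \Hm{n-1}(\partial B(x,r) \setminus \overline{E})$, which is bounded by a universal multiple of $r^{n-1}$ via the asymptotic Euclidean geometry.

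For stability, outer area minimality means that deforming $\rpartial E$ outward by $t\phi\nu$ for any smooth $\phi \geq 0$ produces a surface bounding a strictly larger domain containing $E$, and hence of area at least $\Hm{n-1}(\rpartial E)$; combined with stationarity, this forces the second variation $Q(\phi) \geq 0$ for every such $\phi$. Because $Q(\phi)$ depends on $\phi$ only through $\phi^2$ and $|\nabla \phi|^2$, a standard approximation argument (passing through $|\phi|$) upgrades this one-sided statement to the full two-sided stability required by Schoen--Simon.

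The main obstacle I anticipate is precisely the extension-of-stationarity step: outer area minimality provides variational information only from the outside, and without \prettyref{lem:extension-of-stationarity} one would only have stationarity of $\rpartial E$ on its smooth part, which is insufficient to feed into Schoen--Simon. Verifying the hypotheses of \prettyref{lem:extension-of-stationarity}, in particular converting the given $\Hm{n-3}$-nullity of the singular set into $\Hm{n-2}$-nullity together with the uniform density bound, is the heart of the argument. Once stationarity and stability on all of $M$ are in place, Schoen--Simon immediately yields $\Hm{\alpha}(\partial E \setminus \rpartial E) = 0$ for all $\alpha > n-8$ with $\alpha \geq 0$, completing the proof.
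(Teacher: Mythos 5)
Your proposal is correct and follows essentially the same route as the paper: the paper simply applies \prettyref{thm:minimal-surface-convergence} to the constant sequence \(\Sigma_k = \rpartial E\), noting that stability follows from outer area minimality and that \(\Hm{n-3}(\sing \rpartial E) = 0\) by the definition of a prehorizon domain. Your additional verifications (the density bound from the comparison \(E \cup B(x,r)\), the extension of stationarity across the singular set via \prettyref{lem:extension-of-stationarity}, and the upgrade from one-sided to two-sided stability via \(|\phi|\)) are all sound and merely spell out hypotheses that the paper delegates to the statement of \prettyref{thm:minimal-surface-convergence}.
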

\begin{proof}
The minimal hypersurface \(\rpartial E\) is stable since \(E\) is outer area minimizing.
Moreover, \(\Hm{n-3}(\sing \rpartial E) = 0\) since \(E\) is a prehorizon domain.
Hence \prettyref{thm:minimal-surface-convergence} is applicable for the constant sequence \(\Sigma_k = \rpartial E\).
Since \(\Sigma_k \to \rpartial E\), it follows that \(\Hm{\alpha}(\partial E \setminus \rpartial E) = 0\) if \(\alpha > n - 8\) and \(\alpha \geq 0\).
\end{proof}

\section{The Solomon--White maximum principle}\label{sec:Solomon-White}
The central technical tool of this paper is the Solomon--White maximum principle \cite[Theorem,~p.~686]{SolomonWhite89}.
The version described in \cite[Additional~remarks,~pp.~690-691]{SolomonWhite89} (see also \cite[Theorem~4]{White10}) tells us the following:
\begin{theorem}[The Solomon--White maximum principle]\label{thm:original-solomon-white}
Let \(U\) be a smooth Riemannian manifold with boundary (not necessarily compact or complete).
Let \(T\) be a varifold in \(U\) which is stationary with respect to variations in \(U\).
If \(\partial U\) has positive mean curvature with respect to the outward-directed normal, then the support of \(T\) does not intersect \(\partial U\).
If \(\partial U\) is a connected minimal hypersurface, then the support of \(T\) contains \(\partial U\).
\end{theorem}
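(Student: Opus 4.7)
My plan is to establish both parts using the first variation formula for stationary varifolds, converting the mean curvature hypothesis on \(\partial U\) into a definite sign for the first variation along a suitably chosen test vector field.

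For the first assertion, I would argue by contradiction: suppose \(p \in \operatorname{supp} T \cap \partial U\). The positive mean curvature at \(p\) persists under small normal perturbations, so a tubular neighborhood of \(p\) in \(U\) can be foliated by hypersurfaces \(\Sigma_t\) with \(\Sigma_0 = \partial U\), each of mean curvature at least some \(H_0 > 0\) with respect to its outward normal. Let \(\nu\) be the unit inward-pointing normal field of this foliation, extended smoothly, and let \(\psi\) be a nonnegative cutoff equal to \(1\) near \(p\) and supported in the neighborhood. For the test vector field \(X = \psi \nu\), a direct calculation gives \(\div_\pi \nu \leq -H_t\) on every \((n-1)\)-plane \(\pi\), with equality exactly when \(\pi\) is tangent to a leaf. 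Substituting into \(0 = \int \div_T X \, d\mu_T\) would yield a bulk contribution bounded above by \(-H_0 \mu_T(\{\psi = 1\})\) plus a cutoff contribution controlled by \(|\nabla \psi|\); the monotonicity formula for stationary varifolds supplies the positive lower bound on \(\mu_T\) near \(p\) needed to make the bulk term dominate for a carefully chosen cutoff, yielding the contradiction.

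For the second assertion, I would show that \(\Lambda := \operatorname{supp} T \cap \partial U\) is both open and closed in \(\partial U\), so that connectedness of \(\partial U\) forces \(\Lambda = \partial U\) provided \(\Lambda\) is nonempty (the statement being vacuous otherwise). Closedness is immediate. For openness at a touching point \(p\), I would propagate the touching property along \(\partial U\) via a unique-continuation-style argument based on the monotonicity formula: the minimality of \(\partial U\) rules out the possibility that \(\operatorname{supp} T\) peels off \(\partial U\) at any nearby point, since such peeling, combined with the first-variation identity against a tangential vector field supported near \(p\), would contradict stationarity once the positive density of \(T\) at \(p\) (again from monotonicity) is taken into account. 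Hence \(\operatorname{supp} T\) contains an open neighborhood of \(p\) in \(\partial U\), completing the openness step.

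The main obstacle I anticipate is the quantitative balancing in the first assertion: arranging the cutoff \(\psi\) so that the negative bulk term truly overwhelms the cutoff gradient term requires a lower density bound on \(\mu_T\) near \(p\), which the monotonicity formula must be adapted to deliver uniformly at the boundary. This boundary adaptation of monotonicity, insensitive to any regularity of \(T\) beyond stationarity, is the technical heart of the Solomon--White argument and is where I would expect the calculation to be most delicate. The propagation step in the second assertion is conceptually secondary but does require care to avoid assuming integer multiplicity or any additional structure on \(T\).
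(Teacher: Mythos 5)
First, note that the paper does not prove this statement: it is quoted directly from Solomon--White \cite{SolomonWhite89} (see also \cite{White10}), and the paper only sketches, in \prettyref{sec:Solomon-White}, how the \emph{strengthened} version follows from the original proof. So the comparison here is really between your sketch and the Solomon--White argument that the paper summarizes.

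Your overall shape (first variation against an inward-pointing barrier field, plus a localization) is the right one, but the key inequality you assert --- that the inward normal \(\nu\) of the foliation satisfies \(\div_\pi \nu \leq -H_t\) on \emph{every} \((n-1)\)-plane \(\pi\), with equality exactly for tangent planes --- is false, and its failure is precisely the difficulty the Solomon--White theorem exists to overcome. Writing \(\nu = \grad d\) for the signed distance \(d\) to \(\partial U\), one has \(\div_\pi \nu = \operatorname{trace}_\pi(\operatorname{Hess} d)\), and the eigenvalues of \(\operatorname{Hess} d\) at the boundary are \(0\) (normal direction) and \(-\kappa_1,\dots,-\kappa_{n-1}\). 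Positive mean curvature only controls the trace \(\kappa_1+\dots+\kappa_{n-1}>0\); if some principal curvature \(\kappa_i\) is negative, \(\operatorname{Hess} d\) has a positive eigenvalue and \(\div_\pi\nu>0\) for suitably tilted planes \(\pi\). Even in the convex case (all \(\kappa_i>0\), e.g.\ concentric circles in the plane) one only gets \(\div_\pi\nu\in[-H,0]\) with the value \emph{closest to zero} on tilted planes, the opposite of your claim. Since a general stationary varifold carries no information about which planes occur in its support, your bulk term is not bounded above by \(-H_0\,\mu_T(\{\psi=1\})\) and the contradiction does not close. The repair requires a vector field whose derivative in the normal direction is made very negative --- e.g.\ \(X=h(d)\grad d\) with \(h'\ll 0\), so that the term \(h'(d)\,|\nu^\pi|^2\) dominates on tilted planes while \(-h(d)H\) dominates on nearly tangent ones --- together with the step the paper explicitly highlights from \cite{SolomonWhite89}: replacing \(\partial U\) locally by a bent positive-mean-curvature hypersurface \(\{x_n=u_{s,\tau,\epsilon}\}\) that meets the support of \(T\) only in a compact subset of the coordinate chart, which is what disposes of the tangential cutoff term that your \(|\nabla\psi|\) estimate cannot control by monotonicity alone. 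Your treatment of the second assertion is likewise too vague to verify: the standard route is not a unique-continuation argument but an open-and-closed argument in which openness is obtained by perturbing the minimal \(\partial U\) near a touching point to a hypersurface of positive mean curvature lying on the appropriate side and invoking the first assertion. As written, the proposal would prove the theorem only for convex barriers, which is not enough for the applications in this paper.
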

It is crucial for our application that the theorem is applicable to varifolds which are only stationary with respect to variations in \(U\) and not necessarily stationary with respect to variations in a larger manifold without boundary.
See \cite[Remark~(2),~p.~691]{SolomonWhite89} and the discussion after \cite[Theorem~1]{White10} for further comments on this point.
We will need a slightly stronger version of the theorem, which is not explicitly stated in \cite{SolomonWhite89}, but follows from the proof of \cite[Theorem,~p.~686]{SolomonWhite89}:
\begin{theorem}[A strengthened version of the Solomon--White maximum principle]\label{thm:strengthened-solomon-white}
Let \(U\) be a smooth Riemannian manifold with boundary (not necessarily compact or complete).
Suppose that \(\partial U\) has positive mean curvature with respect to the outward-directed normal.
Let \(T\) be a varifold in \(U\) and suppose that the support of \(T\) intersects \(\partial U\) at a point \(x\).
Then there is a variation which decreases the area of \(T\) to first order.
The variation can be chosen to be compactly supported in any neighborhood of \(x\).
Moreover, the normalized initial velocity of the variation can be made arbitrarily \(C^0\)-close to the inward-directed unit normal vector field of \(\partial U\).
\end{theorem}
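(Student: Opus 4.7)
The plan is to inspect the proof of the original Solomon--White theorem \cite[Theorem,~p.~686]{SolomonWhite89} and note that it in fact constructs an explicit first-order area-decreasing variation; the stationarity hypothesis is invoked only at the end to derive a contradiction from the existence of such a variation. For the strengthened version we extract the variation itself, then verify that it admits the desired localization and directional control.

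Following Solomon--White, let \(d\) denote the signed distance to \(\partial U\), taken negative in the interior of \(U\), and consider \(\phi = f(d)\) on a one-sided tubular neighborhood \(\mathcal{N}\) of \(\partial U\) inside \(U\), where \(f\) is a smooth function of one variable with \(f(0) = 0\), \(f'(0) < 0\), and \(-f''(0)\) sufficiently large. Then \(\phi\) vanishes on \(\partial U\), is positive in \(\mathcal{N} \setminus \partial U\), and \(\operatorname{grad} \phi = f'(d) \nu\) is a negative multiple of \(\nu\) (the extension of the outward unit normal via \(\operatorname{grad} d\)), so it points into \(U\). A direct Hessian computation, which uses that the mean curvature of \(\partial U\) with respect to \(\nu\) is strictly positive and bounded below, shows that there is \(\epsilon > 0\) such that \(\operatorname{tr}_P \operatorname{Hess} \phi \leq -\epsilon\) uniformly in points of \(\mathcal{N}\) and in tangent \((n-1)\)-planes \(P\), after possibly shrinking \(\mathcal{N}\).

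Given \(\phi\), take \(X = \eta \cdot \psi(\phi) \operatorname{grad} \phi\), where \(\psi: [0, \infty) \to [0, 1]\) is a smooth cutoff supported in a small interval \([0, r]\) with \(\psi \equiv 1\) near \(0\), and \(\eta: M \to [0, 1]\) is a spatial cutoff compactly supported in the given neighborhood \(V\) of \(x\) and equal to \(1\) on a smaller neighborhood \(V' \Subset V\). The first variation expands into a Hessian term, \(\int \eta \psi(\phi) \operatorname{tr}_P \operatorname{Hess} \phi \, dT\), which is strictly negative and bounded away from zero because \(x \in \operatorname{supp} T \cap \partial U\) guarantees that \(T\) has positive mass in \(V' \cap \mathcal{N}\) where \(\eta \psi(\phi) \equiv 1\), plus two boundary contributions from \(\psi'(\phi) \operatorname{grad} \phi\) and from \(\operatorname{grad} \eta\). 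The first boundary contribution is precisely the one absorbed by the original Solomon--White calculation for small \(r\); the second is controlled by choosing \(\mathcal{N}\) thin enough that \(|\operatorname{grad} \phi|\) is small on \(\operatorname{supp} \eta \setminus V'\). Finally, since \(\operatorname{grad} \phi\) is a negative multiple of \(\nu\) throughout \(\mathcal{N}\), the normalized velocity \(X/|X|\) equals \(-\nu\) wherever \(X\) does not vanish, giving \(C^0\)-closeness (indeed equality) to the inward unit normal.

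The conceptually interesting point is simply the recognition that stationarity of \(T\) plays no role in the first-variation computation of Solomon--White; the main technical ingredient is the uniform Hessian estimate, which is their original construction, and the remaining work is the scale-matching between the two cutoffs so that both boundary contributions are absorbed by the negative Hessian term.
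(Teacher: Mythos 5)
Your high-level reading of \cite{SolomonWhite89} is the same as the paper's: the original proof produces an explicit first-order area-decreasing variation and only invokes stationarity at the very end to reach a contradiction, so the strengthened statement is obtained by extracting that variation and checking its support and direction. The gap is in your reconstruction of the variation itself: you have omitted the first step of the Solomon--White argument, namely the replacement of the boundary hypersurface (the graph of \(u\)) by a perturbed hypersurface (the graph of \(u_{s,\tau,\epsilon}\)) which still has positive mean curvature but meets the support of \(T\) only in a compact subset of the interior of the chosen coordinate neighborhood. That step is not a technical convenience; it is precisely what makes the localization claimed in the theorem possible, and your substitute for it does not work.

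Concretely, with \(X = \eta\,\psi(\phi)\operatorname{grad}\phi\) the first variation contains the term \(\int \psi(\phi)\,\langle P(\operatorname{grad}\phi), \operatorname{grad}\eta\rangle\, dT\), and your proposed control --- ``choosing \(\mathcal N\) thin enough that \(|\operatorname{grad}\phi|\) is small on \(\operatorname{supp}\eta\setminus V'\)'' --- fails because \(|\operatorname{grad}\phi| = |f'(d)|\approx|f'(0)|\) throughout the collar no matter how thin the collar is; rescaling \(f\) rescales \(X\) linearly and gains nothing. Nor does shrinking the width \(r\) of the collar help: the favorable term is at best \(-\epsilon\, T(\{\eta\psi(\phi)=1\})\), the error term is of order \(T(\operatorname{supp}(\operatorname{grad}\eta)\cap\{\phi\le r\})\), and since \(T\) is an arbitrary varifold (no stationarity, no monotonicity, possibly positive mass concentrated on \(\partial U\) inside the annulus where \(\operatorname{grad}\eta\neq 0\)), there is no a priori comparison between these two masses as \(r\to 0\). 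In the Solomon--White proof this term simply does not arise, because after the perturbation the set \(\operatorname{supp}T\cap\{\text{collar around the perturbed hypersurface}\}\) is contained in the region where the spatial cutoff is identically \(1\), so \(\operatorname{grad}\eta\) vanishes on the support of the integrand. (A minor additional remark: the term \(\eta\,\psi'(\phi)\,|P(\operatorname{grad}\phi)|^2\) that you list as a boundary contribution needing absorption actually has a favorable sign, since \(\psi'\le 0\); the only genuinely dangerous term is the \(\operatorname{grad}\eta\) one.) To repair the argument you should follow the paper and keep the perturbation \(u_{s,\tau,\epsilon}\), noting that \(s\) and \(\epsilon\) may be taken arbitrarily small so that the resulting normal field of the perturbed graph, and hence the normalized variation vector, is \(C^0\)-close to the inward unit normal of \(\partial U\).
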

We can see that the strengthened version of the theorem holds as follows:
The proof of the Solomon--White maximum principle given in \cite{SolomonWhite89} is performed in a manifold without boundary, and the role of \(\partial U\) is played by a smooth hypersurface \(M\).
The first step in the proof consists of choosing a point in \(M\), passing to a neighborhood of this point, and replacing \(M\) with a hypersurface with positive mean curvature which intersects the support of \(T\) only in the interior of the chosen neighborhood.
This is done by working in coordinates where \(M\) is the graph of a function \(u\), and replacing \(u\) by a function \(u_{s, \tau, \epsilon}\) with certain properties.
The second step consists of constructing a vector field orthogonal to the graph of \(u_{s, \tau, \epsilon}\) and proving that a variation of \(T\) by this vector field decreases area to first order.
It can be seen by tracing the proof that if \(\epsilon\) and \(s\) are sufficiently small, so that \(u_{s, \tau, \epsilon}\) is sufficiently close to \(u\), then the normalized variation vector field is \(C^0\)-close to the inward-directed unit normal vector field of \(\partial U\).
It can also be seen that we are free to choose \(\epsilon\) and \(s\) arbitrarily small without affecting the proof, since the only requirements for \(\epsilon\) and \(s\) are that they are sufficiently small compared to other quantities.
In other words, we may assume that the variation vector field is close to parallel to the inward-directed unit normal vector field of \(\partial U\).

\section{Convergence of stable stationary hypersurfaces}\label{sec:schoen-simon}
The second technical tool in this paper is the convergence theory for stable stationary hypersurfaces contained in the work of Schoen and Simon in \cite{SchoenSimon81}.
The result we need follows easily from \cite{SchoenSimon81}, but it is not explicitly stated there and we have not been able to find a proof of the exact result we need in the literature.
It is stated without proof in \cite[Theorem~1.3]{DeLellisTasnady13}, and a proof sketch can be found in \cite[Theorem~4.2]{DahlLarsson16}.
\begin{theorem}[Schoen--Simon \cite{SchoenSimon81}]\label{thm:minimal-surface-convergence}
Let \((M, g)\) be a Riemannian manifold of dimension \(n\) and let \(K \subset M\) be compact.
Let \((\Sigma_k)_{k = 1}^\infty\) be a sequence of smooth (but not necessarily closed) nonempty stable stationary hypersurfaces in \(K\).
Suppose that \(\Hm{n-3}(\sing(\Sigma_k)) = 0\) and \(\limsup_{k \to \infty} \Hm{n-1}(\Sigma_k) < \infty\).
Then there is a subsequence \((\Sigma_{k_i})_{i = 1}^\infty\) of \((\Sigma_k)_{k = 1}^\infty\) and a nonempty stable stationary hypersurface \(\Sigma_\infty \subset M\) such that
\begin{itemize}
	\item \(\Sigma_{k_i} \to \Sigma_\infty\) as varifolds,
	\item \(\Hm{\alpha}(\sing(\Sigma_\infty)) = 0\) if \(\alpha > n - 8\) and \(\alpha \geq 0\),
	\item for every open set \(\Omega\) with compact closure \(\overline{\Omega} \subseteq M \setminus \sing(\Sigma_\infty)\)
	\begin{itemize}
		\item \(\Hm{n-1}(\Sigma_\infty \cap \Omega) \leq \limsup_{i \to \infty} \Hm{n-1}(\Sigma_{k_i} \cap \Omega)\), and
		\item \((\Sigma_{k_i})_{i = 1}^\infty\) converges smoothly to \(\Sigma_\infty\) on \(\Omega\).
	\end{itemize}
\end{itemize}
\end{theorem}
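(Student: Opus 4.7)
The plan is to combine standard varifold compactness with the Schoen--Simon curvature estimates and regularity theorem. I would first reinterpret each \(\Sigma_k\) as an integer rectifiable varifold supported in the compact set \(K\), with mass uniformly bounded by \(\sup_k \Hm{n-1}(\Sigma_k)\). Since each \(\Sigma_k\) is a smooth minimal hypersurface on its regular part and the singular set has \(\Hm{n-1}\)-measure zero (in fact \(\Hm{n-3}\)-measure zero), the associated varifold is stationary. By Allard's compactness theorem for integer rectifiable varifolds of uniformly bounded mass and first variation, I can extract a subsequence \((\Sigma_{k_i})\) converging in the varifold topology to some integer rectifiable varifold \(V\), and lower semicontinuity of the first variation then implies that \(V\) is stationary.

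Next I would upgrade this weak limit to the regularity claimed. The support \(\Sigma_\infty\) of \(V\) is a stable stationary integer rectifiable varifold: stability is inherited from the \(\Sigma_{k_i}\) by passing the stability inequality
\[
\int_{\Sigma_{k_i}} |A_{k_i}|^2 \varphi^2 \, d\Hm{n-1}
\leq
\int_{\Sigma_{k_i}} |\nabla \varphi|^2 \, d\Hm{n-1}
\]
(which holds for test functions \(\varphi\) with support away from \(\sing \Sigma_{k_i}\), and extends across the singular sets since they have \(\Hm{n-3}\)-measure zero and hence \((n-1)\)-capacity zero) to the limit using the varifold convergence and Fatou's lemma. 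With stability of \(V\) in hand, the Schoen--Simon regularity theorem \cite{SchoenSimon81} applies and yields that \(\sing \Sigma_\infty\) has Hausdorff dimension at most \(n-8\), i.e., \(\Hm{\alpha}(\sing \Sigma_\infty) = 0\) for every \(\alpha > n - 8\) with \(\alpha \geq 0\), and that \(\reg \Sigma_\infty\) is a smooth embedded minimal hypersurface. Away from \(\sing \Sigma_\infty\), the Schoen--Simon curvature estimates give uniform bounds on \(|A_{k_i}|\) on compact sets, which together with the varifold convergence upgrade the convergence to smooth graphical convergence of the \(\Sigma_{k_i}\) to \(\Sigma_\infty\) on any \(\Omega\) with \(\overline{\Omega} \subseteq M \setminus \sing \Sigma_\infty\); the area inequality \(\Hm{n-1}(\Sigma_\infty \cap \Omega) \leq \limsup \Hm{n-1}(\Sigma_{k_i} \cap \Omega)\) is then just the mass inequality under varifold convergence on open sets.

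To see that \(\Sigma_\infty\) is nonempty, I would pick a point \(x_{k_i} \in \Sigma_{k_i}\) for each \(i\); by compactness of \(K\), a further subsequence converges to some \(x_\infty \in K\). The monotonicity formula for stationary varifolds gives a uniform lower bound \(\Hm{n-1}(\Sigma_{k_i} \cap B(x_{k_i}, r)) \geq c\, r^{n-1}\) for all sufficiently small \(r > 0\) (say, smaller than the injectivity radius of \(K\) in \(M\)), with a constant \(c > 0\) depending only on the geometry of \(K\). Passing to the varifold limit yields the same lower bound on the mass of \(V\) in any ball around \(x_\infty\), so \(x_\infty \in \Sigma_\infty\).

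The main obstacle is the stability and singular set control: one has to be careful that the stability inequality for each \(\Sigma_{k_i}\), which a priori is only stated on the regular part with test functions vanishing near \(\sing \Sigma_{k_i}\), passes to the limit without losing information across the singular sets, and that the Schoen--Simon hypotheses are satisfied uniformly. This is exactly where the assumption \(\Hm{n-3}(\sing \Sigma_k) = 0\) is used: it allows cutting off the test functions near the singular set (by an argument of the same cut-off flavor as in \prettyref{lem:extension-of-stationarity}) without changing the integrals in the limit, and it ensures that the Schoen--Simon sheeting and regularity machinery, as formulated in \cite{SchoenSimon81} for stable hypersurfaces with sufficiently small singular set, applies to the sequence.
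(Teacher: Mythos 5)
First, note that the paper does not prove this theorem at all: it is stated as a quotation from the literature, with the remark that it ``follows easily from \cite{SchoenSimon81}'' and pointers to \cite[Theorem~1.3]{DeLellisTasnady13} (stated without proof) and \cite[Theorem~4.2]{DahlLarsson16} (proof sketch). So there is no in-paper argument to compare against, and your sketch must be judged on its own. Its overall shape --- varifold compactness for the subsequence, Schoen--Simon machinery for regularity of the limit, curvature estimates for smooth convergence away from the singular set, and the monotonicity formula for nonemptiness --- is the standard and correct outline, and the nonemptiness and mass-semicontinuity steps are fine.

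The gap is in the middle step, where you first deduce stability of the limit varifold \(V\) and then ``apply the Schoen--Simon regularity theorem'' to \(V\). This gets the logical order backwards in a way that does not obviously repair itself. The Schoen--Simon regularity theorem is not a theorem about arbitrary stationary integral varifolds: its hypotheses include an a priori structural bound on the singular set of the object it is applied to (of the same type as the hypothesis \(\Hm{n-3}(\sing \Sigma_k) = 0\) here), and that bound is precisely what is \emph{not} known for the varifold limit \(V\) before the argument is run. Likewise, the stability inequality \(\int |A|^2 \varphi^2 \leq \int |\nabla \varphi|^2\) cannot be passed to \(V\) ``by varifold convergence and Fatou's lemma'': the second fundamental form of \(V\) is not even defined until regularity of \(V\) has been established, and \(\int_{\Sigma_{k_i}} |A_{k_i}|^2 \varphi^2\) is not continuous (nor usefully semicontinuous in the needed direction) under varifold convergence. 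The correct tool is the Schoen--Simon \emph{compactness} theorem (Theorem~2 of \cite{SchoenSimon81}), whose hypotheses are imposed on the approximating sequence \(\Sigma_{k_i}\), not on the limit: one applies their a priori estimates and dimension reduction to the sequence to obtain locally smooth graphical subconvergence away from a closed set of codimension at least \(8\), identifies that set with \(\sing \Sigma_\infty\), and only then reads off stability and minimality of \(\reg \Sigma_\infty\) from the smooth convergence (extending stability across \(\sing \Sigma_\infty\) by a capacity cut-off as in \prettyref{lem:extension-of-stationarity}, which is legitimate once the codimension-\(8\) bound is in hand). If you reorganize the middle of your argument around the compactness theorem for the sequence rather than the regularity theorem for the limit, the rest of your sketch goes through.
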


\section{Proof of the main theorem}
The purpose of this section is to prove \prettyref{thm:main-theorem}, which is the main result of the paper.
The most important part of the proof is \prettyref{prop:prehorizon-set-union}, which tells us that the union of any two prehorizon domains is contained in a prehorizon domain.
This allows us to prove that there is a prehorizon domain which contains all other prehorizon domains, and that this largest prehorizon domain coincides with the trapped region.

We begin with an elementary observation which will be used in the proof of \prettyref{prop:prehorizon-set-union}.
\begin{lemma}\label{lem:smooth-factorization}
Let \(N\) be a smooth manifold and let \(I \subset \R\) be an open neighborhood of \(0\).
Let \(f \colon N \times I \to \R\) be a smooth function such that \(f(x, 0) = 0\) for all \(x \in N\).
Then there is a smooth function \(\phi \colon N \times I \to \R\) such that \(f(x, z) = \phi(x, z) z\) for all \((x, z) \in N \times I\).
\end{lemma}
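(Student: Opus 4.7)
The plan is to use the standard Hadamard-style trick of writing $f$ as an integral along the segment from $(x,0)$ to $(x,z)$. Since $f(x,0)=0$, the fundamental theorem of calculus applied to the function $t \mapsto f(x,tz)$ on $[0,1]$ gives
\[
f(x,z) \;=\; f(x,z) - f(x,0) \;=\; \int_0^1 \frac{d}{dt} f(x,tz)\, dt \;=\; z \int_0^1 (\partial_z f)(x,tz)\, dt.
\]
So I would define
\[
\phi(x,z) \;:=\; \int_0^1 (\partial_z f)(x,tz)\, dt,
\]
and the identity $f(x,z) = \phi(x,z)\, z$ then holds on all of $N \times I$ by construction.

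The only remaining point is to verify that $\phi$ is smooth. Since $f$ is smooth on $N \times I$, so is $\partial_z f$, and the integrand $(t,x,z) \mapsto (\partial_z f)(x,tz)$ is smooth on $[0,1] \times N \times I$. Smoothness of $\phi$ then follows from the standard theorem on differentiating under the integral sign with respect to parameters: any iterated derivative in the $N$ and $z$ directions of $(\partial_z f)(x,tz)$ is continuous and, on a compact neighborhood of any point, uniformly bounded in $t$, so the derivatives may be pulled inside the integral. This can be checked locally in any coordinate chart on $N$, so it is genuinely a statement on the manifold.

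Thus the main (and only) obstacle is the verification of smoothness of $\phi$, and this is routine. The argument is local in $N$, so no partitions of unity or global constructions are needed; everything reduces to the elementary one-variable identity $g(z) - g(0) = z \int_0^1 g'(tz)\, dt$ applied fiberwise.
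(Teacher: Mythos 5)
Your proof is correct and is essentially identical to the paper's: both use the Hadamard-lemma identity \(f(x,z) = z\int_0^1 (\partial_z f)(x,tz)\,dt\) and define \(\phi\) as that integral. The only difference is that you spell out the (routine) justification of smoothness via differentiation under the integral sign, which the paper simply asserts.
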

\begin{proof}
It holds that
\[
f(x, z)
=
\int_0^1 \frac{d}{dt} f(x, tz) \, dt
=
\int_0^1 \frac{\partial f}{\partial z}(x, tz) \, z \, dt
=
\left(\int_0^1 \frac{\partial f}{\partial z}(x, tz) \, dt\right) z.
\]
Let
\[
\phi(x, z)
=
\int_0^1 \frac{\partial f}{\partial z}(x, tz) \, dt.
\]
Then \(\phi\) is a smooth function since \(f\) is smooth, and \(f(x, z) = \phi(x, z) z\).
\end{proof}

\begin{lemma}\label{lem:uniform-area-bound}
Let \((M, g)\) be an asymptotically Euclidean manifold.
Then there is a uniform bound for the perimeters of the prehorizon domains in \(M\).
\end{lemma}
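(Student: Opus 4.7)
The plan is to show that every prehorizon domain is contained in a common bounded coordinate region, after which the outer area minimizing property gives the perimeter bound for free. Fix the asymptotically Euclidean chart for the chosen end, and for each sufficiently large \(R\) let \(D_R\) denote the bounded component of the complement in \(M\) of the coordinate sphere of radius \(R\). It is straightforward to verify that each such \(D_R\) is a bounded domain with \(P(D_R, M) = \Hm{n-1}(\partial D_R)\). If I can fix a single \(R\) so that every prehorizon domain \(E\) satisfies \(E \subseteq D_R\), then outer area minimizing applied with the enlargement \(D_R \supseteq E\) forces \(P(E, M) \leq P(D_R, M) = \Hm{n-1}(\partial D_R)\), which is the uniform bound.

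To produce such an \(R\), I would first use the asymptotically Euclidean condition to fix \(R_0\) large enough that for every \(R \geq R_0\) the coordinate sphere \(\partial D_R\) has strictly positive mean curvature with respect to its outward-pointing normal; this rests on the standard expansion showing that this mean curvature is asymptotically \((n-1)/R > 0\). The substantive claim is then that \(\partial E \subseteq \overline{D_{R_0}}\) for every prehorizon domain \(E\). To prove it, let \(R^{*}\) be the smallest \(R\) with \(\partial E \subseteq \overline{D_R}\), finite and attained by compactness of \(\partial E\), and assume for contradiction that \(R^{*} > R_0\); by construction there is a point \(p \in \partial E \cap \partial D_{R^{*}}\). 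I would apply \prettyref{thm:original-solomon-white} to \(U = \overline{D_{R^{*}}}\) with \(T\) the multiplicity-one varifold associated to \(\rpartial E\) restricted to \(U\). This varifold is stationary in \(U\) because \(\rpartial E\) is a smooth minimal hypersurface and \(\Hm{n-3}(\partial E \setminus \rpartial E) = 0\); the passage from classical stationarity on the regular part to varifold stationarity across the singular set is exactly \prettyref{lem:extension-of-stationarity} applied with \(A = \partial E \setminus \rpartial E\). Solomon--White then yields \(\overline{\rpartial E \cap D_{R^{*}}} \cap \partial D_{R^{*}} = \emptyset\). But \(p\) lies in this forbidden intersection: if \(p \in \rpartial E\), then tangency of \(\rpartial E\) to \(\partial D_{R^{*}}\) at the extremal point of \(|x|^2\) places a punctured neighborhood of \(p\) in \(\rpartial E\) strictly inside \(D_{R^{*}}\); if \(p\) is singular, then \(p\) is a limit of points in \(\rpartial E \cap \overline{D_{R^{*}}}\) and the same conclusion follows after reducing to the previous case. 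Either way we obtain the contradiction.

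Hence \(\partial E \subseteq \overline{D_{R_0}}\), and a short connectedness argument (using that \(M \setminus \overline{D_{R_0}}\) is connected for \(n \geq 3\) and that \(E\) is bounded, so cannot contain this unbounded region) upgrades this to \(E \subseteq D_{R_0+1}\); the outer area minimizing comparison then gives the uniform perimeter bound. The main technical point requiring care is that the singular part of \(\partial E\) not obstruct the Solomon--White argument at the extremal point; this is precisely what the condition \(\Hm{n-3}(\partial E \setminus \rpartial E) = 0\) in the definition of a prehorizon domain, together with \prettyref{lem:extension-of-stationarity}, is designed to make possible.
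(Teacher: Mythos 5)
Your proposal is correct and follows essentially the same route as the paper: the paper's (much terser) proof likewise observes that the end is foliated by spheres of positive mean curvature, invokes the Solomon--White maximum principle to conclude that no prehorizon domain can reach that foliation, and then uses the outer area minimizing property to compare the perimeter with the area of a fixed sphere in the foliation. Your first-touching-sphere argument and the appeal to \prettyref{lem:extension-of-stationarity} to get varifold stationarity across the singular set are just explicit versions of steps the paper leaves implicit.
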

\begin{proof}
The chosen asymptotically Euclidean end is foliated by spheres of positive mean curvature.
By the Solomon--White maximum principle, no prehorizon domain can contain points in this foliation.
Since prehorizon domains are outer area minimizing, their perimeters cannot be larger than the area of any sphere in the foliation.
\end{proof}

\begin{proposition}\label{prop:prehorizon-set-union}
The union of any two prehorizon domains is contained in a prehorizon domain.
\end{proposition}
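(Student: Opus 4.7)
The plan is to construct the desired enclosing prehorizon domain as the solution of an obstacle-type area minimization problem. Given prehorizon domains $E_1$ and $E_2$, let $\mathcal{F}$ denote the class of bounded domains $F$ satisfying $E_1 \cup E_2 \subseteq F$. Using \prettyref{lem:Caccioppoli-compactness} and \prettyref{lem:perimeter-semicontinuity}, together with the uniform perimeter bound from \prettyref{lem:uniform-area-bound}, I would extract a minimizer $E \in \mathcal{F}$ of $P(\cdot, M)$, which (after passing to a measure-theoretic representative) is an open set satisfying $\partial E = \overline{\rpartial E}$. Outer area minimality is then automatic, because any bounded domain $E' \supseteq E$ also lies in $\mathcal{F}$ and therefore $P(E', M) \geq P(E, M)$.

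The heart of the proof lies in the regularity analysis of $\rpartial E$. On the open set $M \setminus (\overline{E_1} \cup \overline{E_2})$, the set $E$ is a free local perimeter minimizer, so by standard De Giorgi regularity $\rpartial E$ is a smooth minimal hypersurface there with singular set of Hausdorff dimension at most $n-8$. For points of $\rpartial E \cap \rpartial E_i$ lying in the regular part of $\rpartial E_i$ (say $i=1$), I would invoke \prettyref{thm:original-solomon-white}. Specifically, take a small ball $V$ around such a point $x$ on which $\rpartial E_1 \cap V$ is a connected smooth minimal hypersurface and $V$ is disjoint from $\overline{E_2}$, and set $U = V \setminus E_1$. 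Any vector field compactly supported in $U$ vanishes on $E_1 \cup E_2$, so its flow preserves the obstacle constraint for both signs of $t$; minimality of $E$ then forces the first variation of $\rpartial E$ along such vector fields to vanish, making $\rpartial E|_V$ a stationary varifold in $U$ with the relevant boundary piece $\rpartial E_1 \cap V$ a connected minimal hypersurface. By \prettyref{thm:original-solomon-white}, this boundary piece is contained in the support of $\rpartial E$, which combined with $E \supseteq E_1$ forces $E$ to coincide with $E_1$ in a neighborhood of $x$ and hence $\rpartial E = \rpartial E_1$ there. An analogous argument handles $\rpartial E_2$, and the lower-dimensional set of points where both $\rpartial E_1$ and $\rpartial E_2$ meet can be absorbed into the singular set below.

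Combining these local descriptions, $\rpartial E$ is a smooth minimal hypersurface on the complement of the compact set $A := \sing \rpartial E_1 \cup \sing \rpartial E_2$, which satisfies $\Hm{n-3}(A) = 0$ by the definition of a prehorizon domain and therefore also $\Hm{n-2}(A) = 0$. Since $\rpartial E$ is stationary with respect to all variations compactly supported in $M \setminus A$ and satisfies the density bound $\Hm{n-1}(\rpartial E \cap B(x, r)) \leq \omega r^{n-1}$ from the monotonicity formula for stable stationary hypersurfaces, \prettyref{lem:extension-of-stationarity} promotes this to full stationarity. Finally, applying \prettyref{thm:minimal-surface-convergence} to the constant sequence $\Sigma_k = \rpartial E$ yields $\Hm{\alpha}(\partial E \setminus \rpartial E) = 0$ for every $\alpha > n - 8$, completing the verification that $E$ is a prehorizon domain containing $E_1 \cup E_2$.

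The main obstacle will be making the Solomon--White step rigorous: one must verify carefully that the variations in $U$ really preserve the obstacle constraint for both positive and negative times, so that genuine two-sided stationarity is obtained rather than merely a one-sided inequality, and arrange the manifold-with-boundary $U$ so that the hypotheses of \prettyref{thm:original-solomon-white} apply in the appropriate local form near $\rpartial E_1$. A secondary technical point is the choice of a suitable measure-theoretic representative of the minimizer so that it is a bounded domain in the precise sense of the definition.
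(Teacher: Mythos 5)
Your overall strategy is the same as the paper's: minimize perimeter among bounded domains containing $E_1 \cup E_2$, apply the Solomon--White principle at the obstacle boundary, promote stationarity across a small exceptional set via \prettyref{lem:extension-of-stationarity}, and finish with Schoen--Simon. However, there are two genuine gaps in your regularity analysis. The first is in the Solomon--White step: from $\rpartial E_1 \cap V \subseteq \operatorname{supp}(\partial E)$ together with $E \supseteq E_1$ you conclude that $E$ coincides with $E_1$ near $x$. This does not follow --- nothing yet prevents additional sheets of $\partial E$, lying in $V \setminus \overline{E_1}$, from accumulating on $\rpartial E_1$ at $x$, in which case $\partial E$ strictly contains $\rpartial E_1$ there and need not be smooth. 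The paper closes this with the subtraction trick from White's Theorem~4: form the varifold $W' = (U \cap \partial E) - (U \cap \rpartial E_1)$, note that it still minimizes area to first order in the complement of $E_1 \cup E_2$, and apply Solomon--White to $W'$ in the manifold with boundary $N = (U \setminus (E_1 \cup E_2)) \cup \rpartial E_1$ to derive a contradiction unless $U \cap \partial E = U \cap \rpartial E_1$ after shrinking $U$.

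The second gap is larger and is where most of the paper's technical work goes. You dismiss $\rpartial E_1 \cap \rpartial E_2$ as lower-dimensional and take $A = \sing\rpartial E_1 \cup \sing\rpartial E_2$, but the transverse part of $\rpartial E_1 \cap \rpartial E_2$ is a priori an $(n-2)$-dimensional submanifold, which does \emph{not} have vanishing $\Hm{n-2}$ measure, so \prettyref{lem:extension-of-stationarity} cannot absorb it; moreover your local argument fails at such points because no ball around them is disjoint from $\overline{E_2}$. The paper handles this set in two separate steps: (i) transverse intersection points are shown never to lie on $\partial E$, by constructing through such a point a hypersurface $\Sigma_* \subset \overline{E_1 \cup E_2}$ of nonpositive mean curvature whose normal at $x$ points along $(\nu_1)_x + (\nu_2)_x$ and invoking the strengthened maximum principle \prettyref{thm:strengthened-solomon-white} to produce an admissible perimeter-decreasing variation of $E$, a contradiction; and (ii) the tangential intersection points where the two boundaries do not locally coincide are shown to form a set of Hausdorff dimension at most $n-3$ by writing $\rpartial E_2$ as a graph over $\rpartial E_1$, deriving a linear elliptic equation for the graph function, and citing nodal/critical-set estimates (Hardt--Hoffmann-Ostenhof--Nadirashvili and B\"ar). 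Neither step appears in your proposal, and without them the exceptional set is too large for the stationarity extension, so the argument as written does not close.
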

\begin{proof}
Let \(E_1\) and \(E_2\) be prehorizon domains.
Let \(K\) be the region inside of a large coordinate sphere in the chosen asymptotically Euclidean end such that \(\partial K\) is a sphere of positive mean curvature and the region outside of \(K\) is foliated by spheres of positive mean curvature.
Consider the set of bounded domains which are contained in \(K\) and contain \(E_1 \cup E_2\).
By applying \prettyref{lem:Caccioppoli-compactness} and \prettyref{lem:perimeter-semicontinuity} to a sequence of such sets for which the perimeter converges to the infimum, we obtain a set of locally finite perimeter \(E\) which minimizes perimeter.
It is now sufficient to prove that \(E\) is a prehorizon domain.
By possibly replacing \(E\) with another representative of the same equivalence class we can make sure that \(\partial E = \overline{\rpartial E}\).
(See \cite[Proposition~3.1 and Theorem~4.4]{Giusti84}.)
By the Solomon--White maximum principle, \(\partial E\) cannot intersect \(\partial K\), and is hence contained in the interior of \(K\).
The complement of \(E\) is a neighborhood of the chosen asymptotically Euclidean end, and this complement has compact closure since \(E\) contains a bounded domain.
This proves that \(E\) itself is a bounded domain.
It holds that \(E\) is outer area minimizing: If some larger bounded domain had strictly smaller perimeter, then such a bounded domain \(E'\) would arise from the minimization problem defining \(E\), possibly with some larger set \(K'\) in place of \(K\).
However, \(E'\) is contained in \(K\) by the Solomon--White maximum principle since the region outside of \(K\) is foliated by spheres of positive mean curvature.
Hence \(E = E'\), proving that \(E\) is outer area minimizing.
We now only need to prove that \(\Hm{n-3}(\partial E \setminus \rpartial E) = 0\) and that \(\rpartial E\) is a smooth minimal hypersurface.
We will prove this locally.
Let \(A = A_1 \cup A_2 \cup A_3\) where
\[
A_1
=
\partial E_1 \setminus \rpartial E_1,
\]
\[
A_2
=
\partial E_2 \setminus \rpartial E_2,
\]
and \(A_3\) is the set of points \(y \in \rpartial E_1 \cap \rpartial E_2\) such that \(\rpartial E_1\) is tangent to \(\rpartial E_2\) at \(y\), but there is no neighborhood of \(y\) where \(\rpartial E_1\) and \(\rpartial E_2\) coincide.
We will now prove that every point in \(\partial E \setminus A\) has a neighborhood where \(\Hm{n-3}(\partial E \setminus \rpartial E) = 0\), \(\rpartial E\) is a smooth minimal hypersurface, and \(\partial E\) is stationary with respect to variations which are compactly supported in the neighborhood.
We do this in five cases:
\newcommand{\casefreedesc}{\(x \notin \partial E_1 \cup \partial E_2\)}
\newcommand{\casesmoothEonedesc}{\(x \in \rpartial E_1 \setminus \partial E_2\)}
\newcommand{\casesmoothEtwodesc}{\(x \in \rpartial E_2 \setminus \partial E_1\)}
\newcommand{\casecoincidencedesc}{\(x \in \rpartial E_1 \cap \rpartial E_2\) and there is a neighborhood of \(x\) where \(\rpartial E_1\) and \(\rpartial E_2\) coincide}
\newcommand{\casesmoothbothnontangentdesc}{\(x \in \rpartial E_1 \cap \rpartial E_2\) and \(\rpartial E_1\) is not tangent to \(\rpartial E_2\) at \(x\)}
\begin{itemize}%[label=\textbf{Case \Roman*}]
\item \casedef{case:free}{\casefreedesc}
\item \casedef{case:smooth-E-1}{\casesmoothEonedesc}
\item \casedef{case:smooth-E-2}{\casesmoothEtwodesc}
\item \casedef{case:coincidence}{\casecoincidencedesc}
\item \casedef{case:smooth-both-nontangent}{\casesmoothbothnontangentdesc}
\end{itemize}

\begin{proofcase}{\ref{case:free}}{\casefreedesc}
Pick a neighborhood \(U\) of \(x\) with closure disjoint from \(\partial E_1 \cup \partial E_2\).
In this neighborhood, \(\partial E\) is area minimizing by construction of \(E\), and hence it follows from the regularity theory for area minimizers (see for instance \cite[Theorem~8.4]{Giusti84},\cite[Theorem~11.8]{Giusti84}, \cite[Theorem~37.7]{Simon83}) that \(\Hm{n-3}(\partial E \setminus \rpartial E) = 0\), that \(\rpartial E\) is a smooth minimal hypersurface, and that \(\partial E\) is stationary with respect to variations which are compactly supported in \(U\).
\end{proofcase}

\begin{proofcase}{\ref{case:smooth-E-1}}{\casesmoothEonedesc}
By letting \(U\) be a sufficiently small neighborhood of \(x\), we can ensure that \(U \cap \partial E_2 = \emptyset\) and that the connected smooth hypersurface \(U \cap \rpartial E_1\) separates \(U\) into two components, one of which is \(U \setminus (E_1 \cup E_2 \cup \rpartial E_1)\).
Then it holds that \(N = (U \setminus (E_1 \cup E_2)) \cup \rpartial E_1\) is a smooth manifold with boundary \(\partial N = U \cap \rpartial E_1\).
Since \(E_1\) is a prehorizon domain, it holds that \(\partial N\) is a minimal hypersurface.
By the Solomon--White maximum principle it follows that \(U \cap \partial E\) contains \(U \cap \rpartial E_1\).
If it is possible to shrink \(U\) so that \(U \cap \partial E\) actually coincides with \(U \cap \rpartial E_1\), then we are done, since \(U \cap \rpartial E_1\) is a smooth minimal hypersurface.
If this were not possible, it would hold that \(x \in U \cap \overline{\partial E \setminus \rpartial E_1}\), and we can use an argument from \cite[Theorem~4]{White10} to obtain a contradiction:
Let \(W' = (U \cap \partial E) - (U \cap \rpartial E_1)\), where we view the two sets as unit density rectifiable varifolds.
Since \(U \cap \rpartial E_1\) is stationary and \(U \cap \partial E\) minimizes area to first order in the complement of \(E_1 \cup E_2\), it holds that \(W'\) minimizes area to first order in the complement of \(E_1 \cup E_2\).
Applying the Solomon--White maximum principle to \(W'\) in the manifold with boundary \(N\) we see that the support of \(W'\) contains \(U \cap \rpartial E_1\), which is a contradiction by definition of \(W'\).
Hence we may shrink \(U\) so that \(U \cap \partial E = U \cap \rpartial E_1\), proving that \(U \cap \partial E = U \cap \rpartial E\) is a smooth minimal hypersurface.
\end{proofcase}

\begin{proofcase}{\ref{case:smooth-E-2}}{\casesmoothEtwodesc}
This case is analogous to \prettyref{case:smooth-E-1}.
\end{proofcase}

\begin{proofcase}{\ref{case:coincidence}}{\casecoincidencedesc}
This case is analogous to \prettyref{case:smooth-E-1} and \prettyref{case:smooth-E-2}.
\end{proofcase}

\begin{proofcase}{\ref{case:smooth-both-nontangent}}{\casesmoothbothnontangentdesc}
We will prove that this case holds vacuously.
Suppose for contradiction that \(x \in \partial E \cap (\rpartial E_1 \cap \rpartial E_2)\) and that \(\rpartial E_1\) is not tangent to \(\rpartial E_2\) at \(x\).
Let \(U\) be a neighborhood of \(x\) such that it holds for \(i \in \{1,2\}\) that \(U \cap \rpartial E_i\) is connected, diffeomorphic to \(\R^{n-1}\), and separates \(U\) into two components.
Let \(\nu_i\) be the outward-directed unit normal vector field of \(U \cap \rpartial E_i\).
Let \((\nu_*)_x\) be the unit vector in direction \((\nu_1)_x + (\nu_2)_x\).
This is well-defined since \(\rpartial E_1\) is not tangent to \(\rpartial E_2\) at \(x\) so that \((\nu_1)_x + (\nu_2)_x \neq 0\).
Then \(g((\nu_*)_x, (\nu_i)_x) > 0\) for \(i \in \{1, 2\}\).

We need, in a neighborhood of \(x\), a hypersurface \(\Sigma_*\) with nonpositive mean curvature, with \(x \in \Sigma_* \subset \overline{E_1 \cup E_2}\), and with normal vector \((\nu_*)_x\) at \(x\).
The intersection of \(\rpartial E_1\) and \(\rpartial E_2\) at \(x\) is transverse, so \(I = U \cap \rpartial E_1 \cap \rpartial E_2\) is a smooth submanifold of codimension 2, after possibly shrinking \(U\).
Extend \(\nu_*\) by letting it be the unit vector field on \(I\) in direction \(\nu_1 + \nu_2\).
This is well-defined after possibly shrinking \(U\) so that \(\nu_1 + \nu_2 \neq 0\) on \(I\).
Let \(\Sigma_* \subset U\) be a hypersurface which contains \(I\), is orthogonal to \(\nu_*\) along \(I\), and has nonpositive mean curvature.
This exists since the mean curvature of \(I\) with respect to the normal vector field \(\nu_*\) can be compensated by the curvature in the direction orthogonal to \(I\) and \(\nu_*\).
After possibly shrinking \(U\) if necessary it holds that \(\Sigma_* \subset \overline{E_1 \cup E_2}\).
Extend \(\nu_*\) to the unit normal vector field on \(\Sigma_*\).

\prettyref{thm:strengthened-solomon-white}, the strengthened version of the Solomon--White maximum principle discussed in \prettyref{sec:Solomon-White}, now gives a vector field \(v\), supported in \(U\), which defines a variation which strictly decreases the perimeter of \(E\).
This vector field is outward-directed along \(\rpartial E_1\) and \(\rpartial E_2\) in some neighborhood of \(x\) since it can be chosen to be arbitrarily close to the normal vector field \(\nu_*\) of \(\Sigma_*\), which is outward-directed along \(\rpartial E_1\) and \(\rpartial E_2\) at \(x\).
Since \(E\) minimizes perimeter outside of \(E_1 \cup E_2\) and the variation along \(v\) decreases perimeter, we have a contradiction.
\end{proofcase}

We have proved that every point in \(\partial E \setminus A\) has a neighborhood where \(\Hm{n-3}(\partial E \setminus \rpartial E) = 0\), \(\rpartial E\) is a smooth minimal hypersurface, and \(\partial E\) is stationary with respect to variations which are compactly supported outside of \(A\).
We will now prove that the set \(A\) is small and compact, which is sufficient for the desired properties to hold on all of \(\partial E\).
Since \(E_1\) and \(E_2\) are prehorizon domains, it holds that \(\Hm{n-3}(A_1) = \Hm{n-3}(A_2) = 0\).
The sets \(A_1\) and \(A_2\) are compact since they are closed subsets of the compact sets \(\partial E_1\) and \(\partial E_2\).
Bounding the dimension of \(A_3\) is slightly more involved.
Consider a point \(x \in A_3\).
Choose a neighborhood \(U\) of \(x\), coordinates \(x_1, x_2, \ldots, x_n\) on \(U\), and a smooth function \(u \colon \R^{n-1} \to \R\) such that
\[
U \cap \rpartial E_1 = \{(x_1, \ldots, x_n) \colon x_n = 0\}
\]
\[
U \cap \rpartial E_2 = \{(x_1, \ldots, x_n) \colon x_n = u(x_1, \ldots, x_{n-1})\}.
\]
We may choose the coordinates to be normal coordinates along \(\rpartial E_1\), so that \(g^{nn} = 1\) and \(g^{ni} = 0\) for \(i \in \{1, \dots, n-1\}\).
The function \(u\) satisfies the minimal hypersurface equation
\[
\div \left( \frac{\grad(x_n - u)}{||\grad(x_n - u)||} \right)
=
0,
\]
in other words
\begin{equation}\label{eq:minimal-hypersurface}
\div \left( \frac{\grad(x_n)}{||\grad(x_n - u)||} \right)
-
\div \left( \frac{\grad(u)}{||\grad(x_n - u)||} \right)
=
0,
\end{equation}
where \(u\) and \(x_n\) are viewed as functions on \(U\).
We will now use this equation to construct a linear partial differential equation which is also satisfied by \(u\).

Concerning the first term in \eqref{eq:minimal-hypersurface}, note that
\[\begin{aligned}
\div \left( \frac{\grad(x_n)}{||\grad(x_n - u)||} \right)
&=
\div \left( \frac{||\grad(x_n)||}{||\grad(x_n - u)||} \frac{\grad(x_n)}{||\grad(x_n)||} \right)
\\
&=
\frac{||\grad(x_n)||}{||\grad(x_n - u)||} \div \left( \frac{\grad(x_n)}{||\grad(x_n)||} \right)
\\
& \qquad\qquad +
\frac{\grad(x_n)}{||\grad(x_n)||}\left( \frac{||\grad(x_n)||}{||\grad(x_n - u)||} \right)
\\
&=
\frac{1}{||\grad(x_n - u)||} \div \left( \frac{\grad(x_n)}{||\grad(x_n)||} \right) + 0.
\end{aligned}\]
The expression \(\div \left( \frac{\grad(x_n)}{||\grad(x_n)||} \right)\) gives the mean curvature of level surfaces of \(x_n\), and the level surface at level \(0\) is \(\rpartial E_1\) which has mean curvature zero.
Hence by \prettyref{lem:smooth-factorization}
\[
\frac{1}{||\grad(x_n - u)||} \div \left( \frac{\grad(x_n)}{||\grad(x_n)||} \right)
=
\phi x_n
\]
for some smooth function \(\phi\).
Evaluated at a point \((x, u(x))\), where \(x = (x_1, \dots, x_{n-1})\), this has the value
\[
\phi(x, u(x)) u(x).
\]

\newcommand{\sdetg}{\sqrt{\det g}}
The second term in \eqref{eq:minimal-hypersurface} can be written in coordinates, where \(\alpha\), \(\beta\), \(\gamma\), and \(\delta\) run through \(\{1, \dots, n\}\) and \(i\), \(j\), \(c\), and \(d\) run through \(\{1, \dots, n-1\}\), as
\[\begin{aligned}
\div \left( \frac{\grad(u)}{||\grad(x_n - u)||} \right)
&=
\frac{1}{\sdetg} \partial_\alpha \left( \sdetg \frac{g^{\alpha\beta} u_{,\beta}}{\sqrt{1 + g^{\gamma\delta} u_{,\gamma} u_{,\delta}}} \right)
\\
&=
\frac{1}{\sdetg} \partial_i \left( \sdetg \frac{g^{ij} u_{,j}}{\sqrt{1 + g^{cd} u_{,c} u_{,d}}} \right)
\\
&=
\frac{g^{ij}}{\sqrt{1 + g^{cd} u_{,c} u_{,d}}} u_{,ij}
+
\frac{1}{\sdetg} \partial_i \left(\frac{g^{ij} \sdetg}{\sqrt{1 + g^{cd} u_{,c} u_{,d}}} \right) u_{,j}
\end{aligned}\]
Introduce functions \(a^{ij}\) and \(b^j\) defined for \(x = (x_1, \dots x_{n-1})\) by
\[
a^{ij}(x)
=
-\frac{g^{ij}}{\sqrt{1 + g^{cd} u_{,c} u_{,d}}},
\]
\[
b^j(x)
=
-\frac{1}{\sdetg} \partial_i \left(\frac{g^{ij} \sdetg}{\sqrt{1 + g^{cd} u_{,c} u_{,d}}} \right),
\]
where the components of the metric are evaluated at the point \((x, u(x))\).
It now holds that \(v = u\) is a solution to the linear partial differential equation
\[
a^{ij}(x) v_{,ij}(x) + b^j(x) v_{,j}(x) + \phi(x, u(x)) v(x)
=
0.
\]
The equation is elliptic since \(g\) is a Riemannian metric.
Then it holds by \cite[Corollary~1.1]{HardtHoffmannOstenhofNadirashvili99} and \cite[Theorem~2]{Bar99} that the set of points \((x_1, \ldots, x_{n-1})\) such that \(u(x_1, \ldots, x_{n-1}) = 0\) and \(du(x_1, \ldots, x_{n-1}) = 0\) has Hausdorff dimension at most \(n-3\), after possibly shrinking \(U\).
Since the function \(u\) is smooth, this means that the set of points in \(U\) where \(\rpartial E_1\) is tangent to \(\rpartial E_2\) has Hausdorff dimension at most \(n-3\).
The set \(A_3\) is closed in \(M \setminus (A_1 \cup A_2)\) since it can be written as the difference between the closed set of points of tangency of \(\rpartial E_1\) and \(\rpartial E_2\), and the open set of points \(x\) where \(\rpartial E_1\) and \(\rpartial E_2\) coincide in some neighborhood of \(x\).

We have now proved that \(A\) is compact and \(\Hm{n-2}(A) = 0\).
Since \(E\) is outer area minimizing and \(\partial E\) is stationary with respect to all variations which are compactly supported outside of \(A\), it holds that \(\partial E\) satisfies the conditions of \prettyref{lem:extension-of-stationarity}, which tells us that \(\partial E\) is stationary with respect to all compactly supported variations.
Using this fact, we can prove that \(A_3\) is actually empty:
Suppose for contradiction that \(x \in A_3\).
Since \(\rpartial E_1\) is a minimal hypersurface, it follows from the Solomon--White maximum principle, \prettyref{thm:original-solomon-white}, that \(\partial E\) contains a neighborhood of \(x\) in \(\rpartial E_1\).
Analogously, \(\partial E\) contains a neighborhood of \(x\) in \(\rpartial E_2\).
Since \(E \supseteq E_1 \cup E_2\), this means that \(\rpartial E_1\) and \(\rpartial E_2\) coincide in a neighborhood of \(x\), which means that \(x \notin A_3\).
Hence \(A_3 = \emptyset\).
We have now proved that \(\Hm{n-3}(A) = \Hm{n-3}(A_1 \cup A_2) = 0\).
Hence \(\Hm{n-3}(\sing \partial E) = 0\), which is one of the conditions needed to apply the Schoen--Simon regularity theory for stable stationary hypersurfaces.
We know from the above argument that \(\partial E\) is stationary with respect to all compactly supported variations.
It is stable since it is outer area minimizing.
The regularity theory from \cite{SchoenSimon81} then tells us that \(\Hm{\alpha}(\partial E \setminus \rpartial E) = 0\) if \(\alpha > n - 8\) and \(\alpha \geq 0\).
This can be seen as a special case of \prettyref{thm:minimal-surface-convergence}.
Finally, \(\rpartial E\) is \(C^\infty\) by standard results on the regularity of \(C^2\) solutions to smooth elliptic partial differential equations.
This proves that \(E\) is a prehorizon domain.
\end{proof}

\begin{proposition}\label{prop:prehorizon-set-chain}
The union of a (possibly uncountable) chain of prehorizon domains is a prehorizon domain.
\end{proposition}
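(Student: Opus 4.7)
Let $\{E_\alpha\}_{\alpha \in I}$ be a chain of prehorizon domains and let $E = \bigcup_\alpha E_\alpha$. The plan is to reduce the uncountable chain to a countable increasing subsequence, then verify the three conditions in the definition of a prehorizon domain for the resulting limit, using \prettyref{thm:minimal-surface-convergence} to obtain the regularity.

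As in the proof of \prettyref{lem:uniform-area-bound}, the Solomon--White maximum principle confines every \(E_\alpha\) to a common compact region \(K\) in the interior of a fixed large coordinate sphere; in particular the volumes are bounded by \(|K|\), and I would set \(v = \sup_\alpha |E_\alpha|\). Using that the chain is totally ordered, I would pick an increasing sequence \(E_{\alpha_1} \subseteq E_{\alpha_2} \subseteq \cdots\) with \(|E_{\alpha_i}| \to v\), and set \(F = \bigcup_i E_{\alpha_i}\). For any \(E_\beta\) not contained in some \(E_{\alpha_i}\), totality forces \(E_\beta \supseteq F\), whence \(|E_\beta| = v = |F|\); since \(E_\beta\) is open, any point of \(E_\beta\) outside \(\overline{F}\) would give an open subset of \(E_\beta \setminus F\) of positive volume, which is impossible. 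Hence \(E_\beta \subseteq \overline{F}\), so \(E \setminus F \subseteq \partial F\), and after choosing a representative of \(F\) with \(|\partial F| = 0\) the sets \(E\) and \(F\) coincide as sets of locally finite perimeter. It therefore suffices to prove that \(F\) is a prehorizon domain.

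The bounded domain conditions for \(F\) are essentially automatic: it is open, contained in \(K\), and admits a representative with \(\partial F = \overline{\rpartial F}\). For outer area minimizing, suppose for contradiction that \(F' \supsetneq F\) is a bounded domain with \(P(F') < P(F)\). Then \(F'\) strictly contains every \(E_{\alpha_i}\), and outer area minimizing of each \(E_{\alpha_i}\) gives \(P(E_{\alpha_i}) \leq P(F')\); combining this with \prettyref{lem:perimeter-semicontinuity} yields \(P(F) \leq \liminf_i P(E_{\alpha_i}) \leq P(F') < P(F)\), a contradiction. The same inequalities together with \(E_{\alpha_i} \subseteq F\) also show \(P(E_{\alpha_i}) \to P(F)\).

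Finally, to verify the regularity conditions I would apply \prettyref{thm:minimal-surface-convergence} to the sequence \(\Sigma_i = \rpartial E_{\alpha_i}\), which consists of smooth stationary hypersurfaces that are stable (since \(E_{\alpha_i}\) is outer area minimizing), satisfy \(\Hm{n-3}(\sing \Sigma_i) = 0\) (since \(E_{\alpha_i}\) is a prehorizon domain), and have uniformly bounded area by \prettyref{lem:uniform-area-bound}. The resulting subsequential varifold limit \(\Sigma_\infty\) is stable and stationary with \(\Hm{\alpha}(\sing \Sigma_\infty) = 0\) for \(\alpha > n - 8\). The \(L^1\)-convergence \(\chi_{E_{\alpha_i}} \to \chi_F\) together with \(P(E_{\alpha_i}) \to P(F)\) forces the associated perimeter measures to converge weakly to the perimeter measure of \(F\), so \(\Sigma_\infty\) coincides with \(\rpartial F\) as a varifold. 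Hence \(\rpartial F\) is a smooth minimal hypersurface, \(\partial F \setminus \rpartial F = \sing \Sigma_\infty\) has \(\Hm{n-3}\)-measure zero, and \(F\) is a prehorizon domain. I expect the main obstacle to be the countable reduction in the first step, since properly identifying the uncountable union with a countable one requires simultaneously exploiting the uniform volume bound and the total ordering; once that reduction is in place, everything else follows routinely from semicontinuity, the outer minimizing property of each \(E_{\alpha_i}\), and Schoen--Simon convergence.
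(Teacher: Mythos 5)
Your proposal is correct and follows the same overall strategy as the paper: reduce the chain to a countable increasing sequence, then combine perimeter semicontinuity, the outer-area-minimizing property of each member, and the Schoen--Simon convergence theorem (\prettyref{thm:minimal-surface-convergence}) to show the limit is a prehorizon domain. The one genuinely different step is the countable reduction. The paper simply observes that \(E = \bigcup_a E_a\) is an open cover of itself and that smooth manifolds are Lindel\"of, so a countable subcover exists; since the family is a chain, this subcover can be taken increasing, and its union is \emph{exactly} \(E\) --- no measure-theoretic identification is needed. Your volume-supremum argument also works, but it only yields \(F \subseteq E \subseteq \overline{F}\), i.e.\ \(E \setminus F \subseteq \partial F\), and you then have to argue that \(E\) and \(F\) agree as sets of locally finite perimeter and that the topological conditions in the definition of a prehorizon domain transfer from \(F\) to \(E\). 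That can be completed (once \(\partial F = \overline{\rpartial F}\) is established one can in fact deduce \(E = F\) pointwise), but note that your intermediate claim that one may choose a representative with \(|\partial F| = 0\) is not automatic for a general set of finite perimeter and really relies on the regularity you prove afterwards; the Lindel\"of route sidesteps this entirely. On the other hand, your write-up is more explicit than the paper's in two places where the paper is terse: the verification that \(F\) is outer area minimizing (via \(P(F) \leq \liminf_i P(E_{\alpha_i}) \leq P(F') < P(F)\)) and the identification of the Schoen--Simon varifold limit \(\Sigma_\infty\) with \(\rpartial F\) using \(P(E_{\alpha_i}) \to P(F)\) and weak convergence of the perimeter measures; both of these are correct and useful elaborations of steps the paper leaves implicit.
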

\begin{proof}
Consider a chain \(\{E_a\}_{a \in A}\) of prehorizon domains, indexed by the totally ordered set \(A\).
Then \(\bigcup_{a \in A} E_a\) is an open cover of itself.
Smooth manifolds are Lindelöf spaces, so this cover has a countable subcover \(\bigcup_{i = 1}^\infty E_{a_i}\).
By passing to a subsequence of \((a_i)_{i = 1}^\infty\), we may assume that this countable subcover is increasing.
The perimeters of the sets in the sequence are uniformly bounded by \prettyref{lem:uniform-area-bound}.
By the convergence theory for sets of locally finite perimeter, as stated in \prettyref{lem:Caccioppoli-compactness}, and the Schoen--Simon convergence theory of stable stationary hypersurfaces described by \prettyref{thm:minimal-surface-convergence}, it follows that a subsequence \((E_{a_i})_{i = 1}^\infty\) converges to a set \(E\) of locally finite perimeter such that \(\rpartial E\) is a smooth stable minimal hypersurface with \(\Hm{\alpha}(\partial E \setminus \rpartial E) = 0\) if \(\alpha > n - 8\) and \(\alpha \geq 0\).
Moreover, the perimeter of \(E\) does not exceed the limit inferior of the perimeters of the sets in the subsequence by \prettyref{lem:perimeter-semicontinuity}.
Since each \(E_{a_i}\) is outer area minimizing, it follows from this that \(E\) is outer area minimizing.
This proves that \(E\) is a prehorizon domain.
Since any subsequence of \((E_{a_i})_{i = 1}^\infty\) is an increasing cover of \(\bigcup_{a \in A} E_a\), it holds that \(E = \bigcup_{a \in A} E_a\).
\end{proof}

\begin{proposition}\label{prop:largest-prehorizon-domain}
There is a, necessarily unique, prehorizon domain which contains all other prehorizon domains.
\end{proposition}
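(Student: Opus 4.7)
The plan is to apply Zorn's lemma to the collection of all prehorizon domains, partially ordered by inclusion, and then use \prettyref{prop:prehorizon-set-union} to upgrade maximality to the desired ``contains all others'' property.

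First, I would verify the hypotheses of Zorn's lemma. The collection of prehorizon domains is nonempty (for instance, the empty set is vacuously a prehorizon domain, or one may exhibit a trivial example; in any case, if no prehorizon domain exists, the conclusion holds trivially with the empty set). Given any totally ordered chain of prehorizon domains, \prettyref{prop:prehorizon-set-chain} provides a prehorizon domain, namely the union of the chain, which serves as an upper bound. Hence Zorn's lemma yields a maximal prehorizon domain \(E_{\max}\).

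Next, I would show that \(E_{\max}\) in fact contains every other prehorizon domain. Let \(E'\) be an arbitrary prehorizon domain. By \prettyref{prop:prehorizon-set-union}, applied to the pair \(E_{\max}\) and \(E'\), there exists a prehorizon domain \(F\) with \(E_{\max} \cup E' \subseteq F\). In particular \(E_{\max} \subseteq F\), so by maximality of \(E_{\max}\) we must have \(E_{\max} = F\). Then \(E' \subseteq F = E_{\max}\), which is the desired containment.

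Finally, uniqueness is immediate: if \(E\) and \(E'\) both contain every prehorizon domain, then in particular \(E' \subseteq E\) and \(E \subseteq E'\), so \(E = E'\). I do not anticipate any serious obstacle here; the entire argument is a clean application of Zorn's lemma, with all of the analytic content already packaged into \prettyref{prop:prehorizon-set-union} and \prettyref{prop:prehorizon-set-chain}. The only mild subtlety worth noting is that one should remark explicitly that the upper bound produced by \prettyref{prop:prehorizon-set-chain} is itself a member of the chain's collection of prehorizon domains (i.e.\ again a prehorizon domain), which is precisely what that proposition asserts.
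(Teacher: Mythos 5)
Your proof is correct and follows essentially the same route as the paper: Zorn's lemma on the inclusion-ordered poset of prehorizon domains, with \prettyref{prop:prehorizon-set-chain} supplying upper bounds for chains and \prettyref{prop:prehorizon-set-union} upgrading maximality to containment of all other prehorizon domains. Your extra remark on nonemptiness of the collection is a reasonable addition the paper leaves implicit.
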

\begin{proof}
Consider the partially ordered set of prehorizon domains ordered by inclusion.
By \prettyref{prop:prehorizon-set-chain}, every chain in this partially ordered set has an upper bound.
It follows from Zorn's lemma that the set has a maximal element.
Let \(E\) be such a maximal element.
If \(E'\) is any prehorizon domain then it follows from \prettyref{prop:prehorizon-set-union} that there is a prehorizon domain \(E''\) which contains \(E \cup E'\).
By maximality of \(E\), it holds that \(E'' = E\) so that \(E' \subseteq E\).
Hence \(E\) contains all other prehorizon domains.
\end{proof}

\begin{theorem}\label{thm:main-theorem}
Let \((M, g)\) be an \(n\)-dimensional asymptotically Euclidean Riemannian manifold with nonempty trapped region.
Suppose that \(n \geq 3\).
Then the trapped region is a prehorizon domain and contains all other prehorizon domains.
In particular, the boundary of the trapped region is a stable smooth minimal hypersurface except for a singular set of codimension at least \(8\).
\end{theorem}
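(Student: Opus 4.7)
The plan is to assemble the pieces that have already been proved in this section: \prettyref{prop:prehorizon-set-union}, \prettyref{prop:prehorizon-set-chain}, \prettyref{prop:largest-prehorizon-domain}, and \prettyref{lem:prehorizon-regularity} together do essentially all the work. The theorem itself should reduce to identifying the maximal prehorizon domain with the trapped region and reading off regularity from the Schoen--Simon theory.

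First, since the trapped region is nonempty, there is at least one prehorizon domain, and so \prettyref{prop:largest-prehorizon-domain} applies and produces a prehorizon domain \(E\) which contains every other prehorizon domain. I would then check that \(E\) coincides with the trapped region: by definition, the trapped region is the union of all prehorizon domains, so since \(E\) contains every prehorizon domain it contains the trapped region, and conversely \(E\) is itself a prehorizon domain so it is contained in the trapped region. This gives the first two assertions in the theorem simultaneously.

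For the regularity statement, I would observe that since \(E\) is a prehorizon domain, the reduced boundary \(\rpartial E\) is by definition a smooth minimal hypersurface, and it is stable because \(E\) is outer area minimizing. The size of the singular part \(\partial E \setminus \rpartial E\) is controlled directly by \prettyref{lem:prehorizon-regularity}, which gives \(\Hm{\alpha}(\partial E \setminus \rpartial E) = 0\) whenever \(\alpha > n - 8\) and \(\alpha \geq 0\). This is exactly the codimension-\(8\) singular set claim.

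Because the theorem is the final assembly step, there is no genuinely hard point left to overcome here; all the substantial analytic work has been absorbed into the auxiliary results. The one thing worth being careful about is that the nonemptiness hypothesis on the trapped region is needed in order for Zorn's lemma in \prettyref{prop:largest-prehorizon-domain} to produce a maximal element, and to avoid the degenerate case where \(E = \emptyset\) would not literally be a ``bounded domain'' in the sense of the earlier definition.
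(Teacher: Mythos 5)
Your proposal is correct and follows essentially the same route as the paper: invoke \prettyref{prop:largest-prehorizon-domain} to get the maximal prehorizon domain \(E\), identify it with the trapped region by the two trivial inclusions, and read off stability from outer area minimization and the singular-set bound from \prettyref{lem:prehorizon-regularity}. Your closing remark about the nonemptiness hypothesis is a sensible point of care that the paper leaves implicit.
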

\begin{proof}
Let \(E\) be the unique prehorizon domain which contains all other prehorizon domains, the existence of which is proved in \prettyref{prop:largest-prehorizon-domain}.
We will prove that \(\trappedregion = E\).
Since the trapped region is the union of all prehorizon domains, it holds that \(E \subseteq \trappedregion\).
For the reverse inclusion, let \(x \in \trappedregion\).
Since \(x \in \trappedregion\), there is a prehorizon domain \(E'\) containing \(x\).
Since \(E\) contains all prehorizon domains, it holds that \(E' \subseteq E\).
Hence \(x \in E' \subseteq E\) proving that \(\trappedregion \subseteq E\).
This means that \(\trappedregion = E\).
Hence \(\partial \trappedregion = \partial E = \overline{\rpartial E}\) is the closure of a smooth minimal hypersurface \(\rpartial E\) such that, by \prettyref{lem:prehorizon-regularity}, \(\Hm{\alpha}(\partial E \setminus \rpartial E) = 0\) if \(\alpha > n - 8\) and \(\alpha \geq 0\).
It is stable since \(E\) is a prehorizon domain and hence outer area minimizing.
This completes the proof.
\end{proof}

\bibliographystyle{amsalpha}
\bibliography{references}

\end{document}